\documentclass[12pt]{amsart}
\usepackage[utf8]{inputenc}
\usepackage{amsfonts}

\usepackage{amsthm}
\usepackage{amssymb}
\usepackage{verbatim}
\usepackage{amsmath}
\usepackage{amscd}
\usepackage{latexsym,dsfont}

\usepackage{bbm}
\usepackage{mathrsfs}

\usepackage[colorlinks=true, pdfstartview=FitV, linkcolor=blue,
citecolor=blue, urlcolor=blue]{hyperref}

\newcommand{\disp}{\displaystyle}
\newtheorem{thm}[equation]{\textbf{Theorem}}

\newtheorem{lem}[equation]{\textbf{Lemma}}

\newtheorem{rem}[equation]{\textbf{Remark}}

\newtheorem{defn}[equation]{\textbf{Definition}}
\newtheorem{hyp}[equation]{\textbf{Hypothesis}}
\newtheorem{exe}[equation]{\textbf{Example}}
\theoremstyle{remark}

\DeclareMathOperator{\Div}{Div}

\newcommand{\rn}{{\mathbb{R}^n}}

\textwidth=1.2\textwidth
\calclayout

\allowdisplaybreaks

\numberwithin{equation}{section}

\title[Bounded weak solutions]
{Bounded weak solutions to elliptic PDE with  data in Orlicz spaces}

\author{David Cruz-Uribe, OFS and Scott Rodney}

%
%

\address{David Cruz-Uribe, OFS \\
Dept. of Mathematics \\
University of Alabama \\
 Tuscaloosa, AL 35487, USA}

\email{dcruzuribe@ua.edu}

\address{Scott Rodney\\
Dept. of Mathematics, Physics and Geology \\ 
Cape Breton University \\
Sydney, NS B1Y3V3, CA} 

\email{scott\_rodney@cbu.ca}

%
%

\thanks{D.~Cruz-Uribe is supported by \
  research funds from the Dean of the College of Arts \& Sciences, the
  University of Alabama. S.~Rodney is supported by the NSERC Discovery
  Grant program. The authors would like to thank Andrea Cianchi for
  calling to our attention several important references.}

\date{November 29, 2020}


\keywords{Orlicz spaces, degenerate elliptic equations, bounded
  solutions, a priori estimates}
\subjclass{35B45, 35D30, 35J25, 46E30}

\begin{document}

\begin{abstract}
 A classical regularity result is that non-negative solutions to the Dirichlet
 problem $\Delta u =f$ in a bounded domain $\Omega$, where $f\in L^q(\Omega)$, $q>\frac{n}2$, satisfy
 $\|u\|_{L^\infty(\Omega)} \leq C\|f\|_{L^q(\Omega)}$.  We extend this result in
 three ways:  we  replace the Laplacian with a degenerate elliptic
 operator; we show that we can take the data $f$ in an Orlicz space
 $L^A(\Omega)$  that lies strictly between $L^{\frac{n}{2}}(\Omega)$ and
 $L^q(\Omega)$, $q>\frac{n}2$; and we show that that we can replace
 the $L^A$ norm in the right-hand side by a smaller expression
 involving the logarithm of the ``entropy bump''
 $\|f\|_{L^A(\Omega)}/\|f\|_{L^{\frac{n}{2}}(\Omega)}$, generalizing a
 result due to Xu.
\end{abstract}
\maketitle


\section{Introduction}

In this paper, we consider boundedness properties of weak
(sub)solutions to the following Dirichlet problem:
%

\begin{eqnarray}\label{dp}
\left\{\begin{array}{rcll}
-\Div\left(Q\nabla u\right)&=&fv&\textrm{for }x\in\Omega\\
u&=&0&\textrm{for }x\in\partial\Omega
\end{array}\right.
\end{eqnarray}

Throughout this paper, $\Omega$ is a bounded domain (i.e., an open and connected subset) of
$\rn$ with $n\geq 3$, $Q=Q(x)$ is a non-negative definite, symmetric,
measurable matrix with $Q\in L^{1}_{loc}(\Omega)$, $v$ is a
weight (i.e., a non-negative, measurable function) on $\Omega$ such that $v\in
L^1(\Omega)$, and the data
function $f$ is in $ L^{1}_{loc}(\Omega)$.

When $Q$ is a uniformly elliptic matrix and $v(x)=1$, it is a classical
result (See Maz$^\prime$ya~\cite{MR0131054,MR0259329},
Stampacchia~\cite{MR192177} and Trudinger~\cite[Theorem~4.1]{MR369884},~\cite[Theorem~8.16]{GT}) that there is a constant $C>0$ such that if
$f\in L^q(\Omega)$, then
$$\|u\|_{L^\infty(\Omega)} \leq C\|f\|_{L^q(\Omega)}$$
for any non-negative weak subsolution $u\in H^1(\Omega)$ of \eqref{dp}
provided $q>\frac{n}{2}$.  Moreover, a counter-example shows that this bound
is sharp even for the Laplacian and we cannot take $q=\frac{n}{2}$.

The standard proof of this result uses the classical Sobolev
inequality,
$$\|\psi\|_{L^{\frac{2n}{n-2}}(\Omega)} \leq C\|\nabla
\psi\|_{L^2(\Omega)},$$
valid for any $\psi\in H^1_0(\Omega)$, combined
with Moser iteration.  The restriction $q>\frac{n}{2}$ is
naturally connected to the classical Sobolev gain factor
$\sigma = \frac{n}{n-2}=\left(\frac{n}{2}\right)'$. 

The goal of this paper is to generalize this result.  First we show
that this estimate can be improved by replacing the space
$L^q(\Omega)$ by an Orlicz space $L^A(\Omega)$ that lies strictly
between $L^{\frac{n}{2}}(\Omega)$ and $L^q(\Omega)$ for
$q>\frac{n}{2}$.  For
brevity, we will defer many definitions to
Section~\ref{section:prelim} below.

\begin{thm} \label{thm:classical}
  Let $Q$ be a uniformly elliptic matrix, and let $f\in L^A(\Omega)$,
  where $A(t)=t^{\frac{n}{2}}\log(e+t)^q$, $q>\frac{n}{2}$.  Then there exists a
  constant $C=C(n,q, Q)$ such that, given any non-negative weak subsolution $u$
  of

\begin{align*}
\left\{\begin{array}{rcll}
-\Div\left(Q\nabla u\right)&=&f&\textrm{for }x\in\Omega,\\
u&=&0&\textrm{for }x\in\partial\Omega,
\end{array}\right.
\end{align*}

%
we have the estimate
  \[ \|u\|_{L^\infty(\Omega)} \leq C\|f\|_{L^A(\Omega)}. \]
\end{thm}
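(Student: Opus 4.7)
The plan is to run a Stampacchia-style truncation argument, with Orlicz-Hölder replacing ordinary Hölder's inequality so that the logarithmic refinement in $A$ can be exploited. For $k \geq 0$, I set $v_k = (u-k)_+$, which lies in $H^1_0(\Omega)$ and is supported on the superlevel set $A_k = \{u > k\}$. Testing the subsolution inequality with $v_k$ and invoking the uniform ellipticity of $Q$, I obtain
\[
\int_{A_k} |\nabla v_k|^2 \, dx \leq c \int_{A_k} f v_k \, dx.
\]

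Next I bound the right-hand side using the generalized Hölder inequality in Orlicz spaces,
\[
\int_{A_k} f v_k \, dx \leq 2 \|f\|_{L^A(\Omega)} \|v_k\|_{L^{\bar{A}}(A_k)},
\]
where $\bar{A}$ denotes the Young function complementary to $A$. A standard computation gives $\bar{A}(s) \asymp s^{n/(n-2)} \log(e+s)^{-\alpha}$ for an exponent $\alpha$ determined by $q$, which is large precisely when $q > \tfrac{n}{2}$. To estimate the $L^{\bar{A}}$ norm, I combine the classical Sobolev embedding $\|v_k\|_{L^{2^*}(\Omega)} \leq c\|\nabla v_k\|_{L^2(\Omega)}$, $2^*=\tfrac{2n}{n-2}$, with a Hölder step on $A_k$: since $2^* > \tfrac{n}{n-2}$ (the natural exponent associated to $\bar A$), I obtain
\[
\|v_k\|_{L^{\bar{A}}(A_k)} \leq c\, \|\nabla v_k\|_{L^2(A_k)} \cdot \gamma(|A_k|),
\]
where $\gamma$ is a small factor that incorporates both the measure gain from Hölder and the logarithmic improvement built into $\bar A$.

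Combining the two steps and absorbing $\|\nabla v_k\|_{L^2(A_k)}$ from one side into the other, I get
\[
\|\nabla v_k\|_{L^2(A_k)} \leq C \|f\|_{L^A(\Omega)} \gamma(|A_k|).
\]
For $h > k$, using $v_k \geq h-k$ on $A_h \subset A_k$ together with Sobolev once more yields a recursion of the form
\[
(h-k)\,|A_h|^{1/2^*} \leq C \|f\|_{L^A(\Omega)} \gamma(|A_k|),
\]
and the classical Stampacchia lemma applied to $\phi(k) = |A_k|$ shows that $\phi(k) = 0$ for all $k$ exceeding a multiple of $\|f\|_{L^A(\Omega)}$, giving the desired bound.

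The main obstacle is the bookkeeping around the complementary function $\bar A$ and the logarithmic factor $\gamma$: one must verify that the hypothesis $q > \tfrac{n}{2}$ is exactly what makes the logarithmic integral in the Stampacchia iteration converge — a weaker hypothesis such as $q = \tfrac{n}{2}$ is known to produce only BMO-type bounds, so the argument cannot be purely formal and has to track the log exponents precisely. The rest of the reasoning (test functions, ellipticity, Sobolev, iteration) is standard, but the Orlicz-Hölder step is the place where the new gain enters and where care is essential.
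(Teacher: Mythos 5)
Your overall strategy --- truncate at level $k$, test with $(u-k)_+$, and use the Orlicz--H\"older inequality to squeeze a logarithmic gain out of the level sets --- is the same as the paper's (Theorem~\ref{thm:classical} is proved there as the special case $\sigma=\frac{n}{n-2}$, $v\equiv 1$ of Theorem~\ref{main0}, via exactly this kind of level-set estimate). The gap is in the last step, where you invoke ``the classical Stampacchia lemma.'' Write out what your recursion actually is: with $\phi(k)=|A_k|$ and $\gamma$ computed from $A(t)=t^{n/2}\log(e+t)^q$, the best available bound is
\[
(h-k)\,\phi(h)^{1/2^*}\;\leq\; C\,\|f\|_{L^A(\Omega)}\,
\frac{\phi(k)^{1/2^*}}{\log\bigl(e+\phi(k)^{-1}\bigr)^{\beta}},
\qquad \beta=\beta(n,q),
\]
which is precisely inequality~\eqref{iterator} of the paper. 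The exponent of $\phi(k)$ on the right equals the exponent of $\phi(h)$ on the left: there is \emph{no} power gain in the measure, only a logarithmic one. Stampacchia's lemma requires $\phi(h)\leq C(h-k)^{-\alpha}\phi(k)^{\beta_0}$ with $\beta_0>1$ strictly, and fails for $\beta_0=1$. Even the usual dyadic levels $k_j=d(1-2^{-j})$ do not rescue the argument: beating the factor $2^{\alpha j}$ coming from $(k_{j+1}-k_j)^{-\alpha}$ with a per-step gain of only $\log(e+\phi^{-1})^{-\beta}$ would require doubly exponential decay of $\phi$, which a single logarithmic gain cannot produce.

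Closing this gap is the actual content of the proof, and it is exactly where the hypothesis $q>\frac n2$ enters. The paper replaces geometric levels by the polynomially spaced levels $C_k=r_0\bigl(1-(k+1)^{-\epsilon}\bigr)$ with $\epsilon=\frac{q}{\sigma'}-1>0$ (here $\sigma'=\frac n2$) and $r_0=\tau_0\|f\|_{L^A(\Omega)}$, takes logarithms, and runs an induction on $m_k=\log\bigl(1/|S(C_k)|\bigr)$ showing $m_{k+1}\geq m_k+1$: the per-step logarithmic gain $\frac{2\sigma q}{\sigma'}\log\bigl(m_k/(k+2)\bigr)$ compensates the loss $2\sigma(1+\epsilon)\log(k+2)$ caused by the shrinking increments $C_{k+1}-C_k\gtrsim \epsilon\, r_0\,(k+2)^{-(1+\epsilon)}$ precisely because $2\sigma(1+\epsilon)=\frac{2\sigma q}{\sigma'}$, and the induction can be started because a separate a priori $L^{2^*}$ estimate for $u$ lets one choose $\tau_0$ so that $m_0\geq 2$. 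None of this is bookkeeping: without this bespoke iteration the recursion you derive does not close, so your proof stops one essential step short.
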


\begin{rem}
  After completing this paper we learned that a somewhat more general
  version of Theorem~\ref{thm:classical} was proved by
Cianchi~\cite[Theorem~5]{MR1721824} using very different methods.
\end{rem}

\medskip

We will prove Theorem~\ref{thm:classical} as a special case of a more
general result for solutions of~\eqref{dp} that holds for a much
larger class of matrices $Q$.   We can allow $Q$ to be both degenerate
and singular, but must impose some restrictions on the largest and
smallest eigenvalues.  We encode these restrictions in two assumptions
on the integrability of the largest eigenvalue and on the existence of
an $L^2$ Sobolev inequality with gain.   We state them together as a general
hypothesis.


\begin{hyp} \label{sobolev} Given the matrix $Q$ and the weight $v\in
  L^1(\Omega)$, assume that for some constant $k>0$,
  \[ |Q(x)|_{op} = \sup\{ |Q(x)\xi| : \xi \in \rn, |\xi|= 1\} \leq
    kv(x) \; \text{a.e.} \]
Moreover, assume that
  there exist constants $\sigma=\sigma(n,Q, v,\Omega)>1,~C_0\geq 1$ such
  that for every $\psi\in Lip_0(\Omega)$
\begin{equation}\label{sob}
  \left(\int_\Omega | \psi(x) |^{2\sigma}~v(x)dx\right)^{\frac{1}{2\sigma}}
  \leq
  C_0\left(\int_\Omega \left|\sqrt{Q(x)}\nabla \psi(x)\right|^2~dx\right)^{\frac12}.
\end{equation}
\end{hyp}


The first assumption, that $|Q|_{op}\leq kv$, in
Hypothesis~\ref{sobolev} is necessary to prove many of the necessary
properties of weak derivatives in the corresponding degenerate Sobolev
space.  The
second assumption, that inequality~\eqref{sob} holds, in
Hypothesis~\ref{sobolev} reduces to the classical Sobolev inequality
if $Q$ is a uniformly elliptic matrix in $\Omega$ and
$\sigma=\frac{n}{n-2}=(\frac{n}{2})'$.  It allows us to perform the
necessary De Giorgi iteration.

These assumptions hold in several important special cases.  If $v$
satisfies the Muckenhoupt $A_2$ condition,
\[ [v]_{A_2} = \sup_B
  \frac{1}{|B|}\int_B v(x)\,dx \frac{1}{|B|}\int_B v(x)^{-1}\,dx
  < \infty, \]
where the supremum is taken over all balls $B$ in $\rn$,
and if $Q$ satisfies the degenerate ellipticity condition
\[ \lambda v(x)|\xi|^2 \leq \langle Q(x)\xi,\xi\rangle \leq
  \Lambda  v(x)|\xi|^2, \]
where $0<\lambda\leq \Lambda<\infty$,  then $|Q|_{op}\leq \Lambda v$ and~\eqref{sob} holds.  (See~\cite{MR643158}.)  More generally,
suppose that $u$ and $v$ are a pair of weights such that  $u(x)\leq
v(x)$ a.e., $v$ satisfies a doubling condition, $u\in A_2$, and there
exists $\sigma>1$ such that given any balls $B_1\subset B_2 \subset
\Omega$,
\[ \frac{r(B_1)}{r(B_2)}
  \bigg(\frac{v(B_1)}{v(B_2)}\bigg)^{\frac{1}{2\sigma}}
  \leq C \bigg(\frac{u(B_1)}{u(B_2)}\bigg)^{\frac{1}{2}}, \]
and $Q$ satisfies the degenerate ellipticity condition
\[  u(x)|\xi|^2 \leq \langle Q(x)\xi,\xi\rangle 
  \leq v(x)|\xi|^2, \]
then $|Q|_{op}\leq  v$  and we have the Sobolev inequality
\[ \left(\int_\Omega | \psi(x) |^{2\sigma}~v(x)dx\right)^{\frac{1}{2\sigma}}
  \leq
  C_0\left(\int_\Omega \left|\nabla
      \psi(x)\right|^2~u(x)dx\right)^{\frac12}, \]
so again ~\eqref{sob} holds.  (See~\cite{MR805809}.)

\begin{rem}
In~\cite{CRR2}, the authors and Rosta proved that when $v=1$, with
minor additional hypotheses
the global Sobolev inequality \eqref{sob} follows from a weaker, local Sobolev
inequality,
\[ \left(\frac{1}{|B|}\int_B
    |\psi(x)|^{2\sigma}\,dx\right)^{\frac{1}{2\sigma}}
  \leq
  C \bigg[ \frac{r(B)}{|B|}\int_B |\sqrt{Q}\nabla \psi(x)|^2\,dx +
  \frac{1}{|B|}\int_B |\psi(x)|^2\,dx \bigg]^{\frac{1}{2}}, \]
that holds for all (sufficiently small) balls $B\subset \Omega$. 
\end{rem}

We can now state our main result, which is a generalization of
Theorem~\ref{thm:classical} to degenerate elliptic operators.  Again,
for precise definitions see Section~\ref{section:prelim}.


\begin{thm}\label{main0}
Given a weight $v$ and the non-negative definite, symmetric matrix
$Q$, suppose that Hypothesis~\ref{sobolev} holds for some $\sigma>1$.
 Let $A(t) = t^{\sigma'}\log(e+t)^q$ where $q>\sigma'$. If  $f\in
 L^A(v;\Omega)$,  then any non-negative weak subsolution ${\bf
   u}=(u,\nabla u)\in QH^1_0(v;\Omega)$ of  \eqref{dp} satisfies
\begin{equation}\label{b1}
\|u\|_{L^\infty(v;\Omega)} \leq C\|f\|_{L^A(v;\Omega)},
\end{equation}
where $C$ is independent of both ${\bf u}$ and $f$. 
\end{thm}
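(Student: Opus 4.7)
The plan is to prove Theorem~\ref{main0} by a De Giorgi truncation scheme adapted to the Orlicz data. Given a non-negative weak subsolution $u$ and a level $k > 0$, I would use $(u-k)_+$ as a test function in the weak formulation of \eqref{dp}. Since the subsolution lives in the degenerate Sobolev space $QH^1_0(v;\Omega)$ built on the matrix weight $Q$, one first verifies that $(u-k)_+$ is an admissible test function (here the bound $|Q|_{op} \leq kv$ from Hypothesis~\ref{sobolev} ensures the usual truncation properties carry over). This yields the Caccioppoli-type estimate
\[ \int_\Omega |\sqrt{Q}\nabla (u-k)_+|^2\, dx \leq \int_{A_k} f (u-k)_+\, v\, dx, \qquad A_k := \{u > k\}. \]
Applying the Sobolev inequality \eqref{sob} to $(u-k)_+$ (after approximation by Lipschitz functions with compact support) then gives
\[ \|(u-k)_+\|_{L^{2\sigma}(v;\Omega)}^2 \leq C_0^2 \int_{A_k} f (u-k)_+\, v\, dx. \]

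The heart of the matter is to control the right-hand side by $\|f\|_{L^A(v;\Omega)}$ and a power of $v(A_k)$ that decays fast enough for De Giorgi iteration to close. I would apply the generalized H\"older inequality for Orlicz spaces, pairing $f$ with its complementary Young function $\tilde{A}$, followed by a second three-function H\"older step that trades $L^{\tilde{A}}$ integrability for the stronger $L^{2\sigma}$ integrability provided by Sobolev. For $A(t) = t^{\sigma'}\log(e+t)^q$ one computes $A^{-1}(t) \approx t^{1/\sigma'}(\log t)^{-q/\sigma'}$ and hence $\tilde{A}^{-1}(t)\approx t^{1/\sigma}(\log t)^{q/\sigma'}$; combined with $(t^{2\sigma})^{-1/(2\sigma)} = t^{-1/(2\sigma)}$ the third Young function $C$ satisfies
\[ \|\chi_{A_k}\|_{L^C(v;\Omega)} = \Phi\bigl(v(A_k)\bigr), \qquad \Phi(s) \sim s^{1/(2\sigma)}\bigl(\log(1/s)\bigr)^{-q/\sigma'} \text{ as } s\to 0^+. \]
Combining the estimates yields
\[ \|(u-k)_+\|_{L^{2\sigma}(v;\Omega)} \leq C \|f\|_{L^A(v;\Omega)}\, \Phi(v(A_k)). \]

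To conclude, I would use Chebyshev: for $h > k$,
\[ (h-k)\, v(A_h)^{1/(2\sigma)} \leq \|(u-k)_+\|_{L^{2\sigma}(v;\Omega)} \leq C\|f\|_{L^A(v;\Omega)}\, \Phi(v(A_k)). \]
Setting $\varphi(k) := v(A_k)$ gives a nonlinear recurrence of Stampacchia type with a logarithmic bonus coming from the factor $(\log(1/\varphi(k)))^{-q/\sigma'}$. Because $q > \sigma'$, a standard iteration lemma (variant of Stampacchia's, e.g.\ choosing a geometric sequence $k_j = k_0 + M(1 - 2^{-j})$ with $M \sim \|f\|_{L^A(v;\Omega)}$) forces $\varphi(k_j) \to 0$, and hence $\|u\|_{L^\infty(v;\Omega)} \leq k_0 + M$, which is the desired bound \eqref{b1}.

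The two anticipated obstacles are: (i) the admissibility of $(u-k)_+$ as a test function in the matrix-weighted Sobolev space $QH^1_0(v;\Omega)$, which requires care because $\nabla u$ is defined only in the $Q$-weighted sense and equivalence classes of gradients must be handled through the representation ${\bf u} = (u,\nabla u)$; and (ii) the precise identification of the Young function $C$ and the verification that the resulting iteration terminates. The latter is where $q > \sigma'$ enters decisively: with $q = \sigma'$ the logarithmic factor is too weak to close the iteration, recovering the classical borderline behavior of the $L^{n/2}$ threshold in the uniformly elliptic case.
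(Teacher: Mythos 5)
Your setup is sound and matches the paper's strategy: testing with $(u-k)_+$ (admissible by the truncation lemma for $QH^1_0(v;\Omega)$), applying the Sobolev inequality, and then an Orlicz--H\"older argument whose net effect is exactly the paper's key recurrence
\begin{equation*}
(h-k)\,v(A_h)^{\frac{1}{2\sigma}} \;\leq\; C\,\|f\|_{L^A(v;\Omega)}\,
\frac{v(A_k)^{\frac{1}{2\sigma}}}{\log\bigl(e+v(A_k)^{-1}\bigr)^{\frac{q}{\sigma'}}}.
\end{equation*}
Your three-function H\"older route and the paper's two-function H\"older applied to $|f|^{(2\sigma)'}$ with $B(t)=t^{\sigma'/(2\sigma)'}\log(e+t)^q$ are equivalent, and your computation of $\Phi$ is correct.

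The genuine gap is the final iteration step. The recurrence above, raised to the power $2\sigma$, reads $\varphi(h)\leq C(h-k)^{-2\sigma}\varphi(k)^{\beta}\bigl(\log(1/\varphi(k))\bigr)^{-2\sigma q/\sigma'}$ with $\beta=1$: the measure appears to the \emph{first} power, and the only gain per step is logarithmic. Stampacchia's lemma, and every standard variant based on geometric levels $k_j=k_0+M(1-2^{-j})$, requires $\beta>1$ and fails here: with $\delta_j:=k_{j+1}-k_j=M2^{-j-1}$ the logarithmic quantity $m_j=\log(1/\varphi(k_j))$ satisfies only $m_{j+1}\geq m_j+\tfrac{2\sigma q}{\sigma'}\log m_j-2\sigma(j+1)\log 2-C'$, and the linearly growing penalty $2\sigma j\log 2$ swamps the logarithmic gain $\log m_j$ for any sublinear or linear growth of $m_j$, so the iteration does not close. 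This is precisely why the paper uses \emph{polynomially} spaced levels $C_k=r_0(1-(k+1)^{-\epsilon})$ as in \eqref{ck}, with the specific choice $\epsilon=\tfrac{q}{\sigma'}-1>0$: then $C_{k+1}-C_k\gtrsim \epsilon r_0(k+2)^{-(1+\epsilon)}$, the penalty becomes only $2\sigma(1+\epsilon)\log(k+2)=\tfrac{2\sigma q}{\sigma'}\log(k+2)$, and this is exactly absorbed by the gain $\tfrac{2\sigma q}{\sigma'}\log m_k$ once one proves inductively that $m_k\geq m_0+k$ (inequality \eqref{goodest}). The hypothesis $q>\sigma'$ enters \emph{only} here, through $\epsilon>0$ and the summability $\sum_k(k+2)^{-(1+\epsilon)}<\infty$; your proposal identifies that $q>\sigma'$ must be decisive but does not supply the mechanism, and the mechanism you do name (geometric levels) provably fails. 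A secondary, smaller omission: you need a base-case bound showing $v(A_{k_0})$ can be made small (e.g.\ $<e^{-2}$) by choosing $M\sim\tau_0\|f\|_{L^A}$ with $\tau_0$ large; the paper obtains this from the $L^{2\sigma}$ estimate \eqref{l2est} with $u$ itself as test function, and without it the logarithmic gain $\log m_k$ could be negative at the start of the induction.
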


We originally conjectured that the exponent $q$ in Theorem~\ref{main0}
is sharp in general, but  we were not able to prove this or find a
counter-example.  We then learned that
Cianchi~\cite[Theorem~5]{MR1721824}, as a consequence of a more general
result, showed that in the classical setting when $Q$ is uniformly
elliptic and $\sigma=(\frac{n}{2})'$, we must have that
$q>\frac{n}{2}-1$.  We round out his result by giving a simple
counter-example proving that it is sharp for the Laplacian.

\begin{exe} \label{example:not-sharp-but-close}
Let $n\geq 3$, and let $\Omega=B(0,1)$.  Then there exists a function
$f\in L^A(\Omega)$, where $A(t)=t^{\frac{n}{2}}\log(e+t)^q$, $q<\frac{n}{2}-1$, such
that the non-negative weak solution of the Poisson equation
\begin{align*}
\left\{\begin{array}{rcll}
-\Delta u&=&f&\textrm{for }x\in\Omega,\\
u&=&0&\textrm{for }x\in\partial\Omega,
\end{array}\right.
\end{align*}
%
is unbounded.
\end{exe}

\begin{rem}
  We  conjecture that the sharp exponent in Theorem~\ref{main0} is
  $q>\sigma'-1$.  However, the bound $q>\sigma'$ appears to be
  intrinsic to our proof, so either our proof needs to be refined or
  another approach is needed.  We note that the proof
  in~\cite{MR1721824} relies on re-arrangement estimates and so does
  not readily extend to the case of degenerate operators.
\end{rem}
\medskip

Our second main result shows that inequality~\eqref{b1} can be
sharpened so  that the right-hand side  only depends on the logarithm of the $L^A$
norm.  


\begin{thm}\label{main1}
Given a weight $v$ and a non-negative definite, symmetric matrix
$Q$, suppose that Hypothesis~\ref{sobolev} holds for some $\sigma>1$.
 Let $A(t) = t^{\sigma'}\log(e+t)^q$, where $q>\sigma'$. If  $f\in
 L^A(v;\Omega)$,  then any non-negative weak subsolution ${\bf
   u}=(u,\nabla u)\in QH^1_0(v;\Omega)$ of  \eqref{dp} satisfies
\begin{equation} \label{eqn:fixed}
\|u\|_{L^\infty(v;\Omega)} \leq
C\|f\|_{L^{\sigma'}(v;\Omega)}
\left(1+\log\left(1+\frac{\|f\|_{L^A(v;\Omega)}}
    {\|f\|_{L^{\sigma'}(v;\Omega)}}\right)\right),
\end{equation}
where $C$ is independent of both ${\bf u}$ and $f$.
\end{thm}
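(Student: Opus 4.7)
My approach is to refine the De Giorgi iteration underlying the proof of Theorem~\ref{main0} so as to track the precise dependence of the $L^\infty$ bound on both $m := \|f\|_{L^{\sigma'}(v;\Omega)}$ and $M := \|f\|_{L^A(v;\Omega)}$. By the positive homogeneity of the Dirichlet problem~\eqref{dp} and of the Luxemburg norms, I first rescale so that $m = 1$; since $M \geq 1$ always and the case $M \leq e$ is covered directly by Theorem~\ref{main0}, the task reduces to proving $\|u\|_{L^\infty(v;\Omega)} \leq C(1 + \log M)$ when $M \geq e$.

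Following the proof of Theorem~\ref{main0}, for $k > 0$ I test the weak subsolution equation against $(u-k)_+$ and apply the Sobolev inequality~\eqref{sob}. A three-factor H\"older inequality (pairing $f \chi_{A_k} \in L^{\sigma'}(v;\Omega)$, $(u-k)_+ \in L^{2\sigma}(v;\Omega)$, and $\chi_{A_k} \in L^{2\sigma}(v;\Omega)$, where $A_k := \{u > k\}$) followed by Chebyshev's inequality yields
\[ (k'-k)^{2\sigma}\,v(A_{k'}) \leq C\,\|f\chi_{A_k}\|_{L^{\sigma'}(v;\Omega)}^{2\sigma}\,v(A_k) \qquad (k' > k \geq 0). \]
The new ingredient is a refined Orlicz--H\"older estimate tailored to $A(t)=t^{\sigma'}\log(e+t)^q$ and its H\"older-complementary Young function, giving
\[ \|f\chi_E\|_{L^{\sigma'}(v;\Omega)} \leq \frac{C\,M}{\log(e + 1/v(E))^{q/\sigma'}} \]
for any measurable $E \subset \Omega$ of small $v$-measure: the quantitative form of the heuristic that the $L^{\sigma'}$-mass of an $L^A$ function on a small set decays logarithmically.

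Substituting the data estimate into the Chebyshev recursion and iterating along a geometric level sequence $k_j := k_0(2 - 2^{-j})$, writing $\phi_j := v(A_{k_j})$, produces the nonlinear recursion
\[ \phi_{j+1} \leq C\,4^{\sigma(j+1)}\,(M/k_0)^{2\sigma}\,\phi_j \big/\log(e + 1/\phi_j)^{2q}. \]
An initial smallness bound $\phi_0 \leq C k_0^{-2\sigma}$ follows from the energy estimate $\|u\|_{L^{2\sigma}(v;\Omega)} \leq Cm = C$ (obtained by testing \eqref{dp} against $u$ itself and using \eqref{sob}). Choosing $k_0$ of order $1 + \log M$, the hypothesis $q > \sigma'$ is exactly what provides enough logarithmic gain for the iteration to force $\phi_j \to 0$. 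Passing to the limit gives $u \leq 2k_0 = C(1 + \log M)$ almost everywhere, which after undoing the normalization is~\eqref{eqn:fixed}.

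The central technical difficulty lies in closing the iteration at the critical threshold $k_0 \sim \log M$: the gain per step is merely $\log(1/\phi_j)^{2q}$ in $\phi_j$ while the loss from the geometric level sequence is the exponential $4^{\sigma j}$. Standard De Giorgi-type arguments require a polynomial gain in $\phi_j$, so a careful direct analysis of the recursion — perhaps using a non-geometric choice of the sequence $k_j$, or iterating Theorem~\ref{main0} itself on successive excess levels $(u-k_j)_+$ — is required to close the argument, and the condition $q > \sigma'$ should emerge as the exact threshold for convergence. This is where the proof differs essentially from the rearrangement-based methods of Cianchi~\cite{MR1721824}.
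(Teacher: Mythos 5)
Your reduction to $\|f\|_{L^{\sigma'}(v;\Omega)}=1$ and your refined Orlicz--H\"older estimate $\|f\chi_E\|_{L^{\sigma'}(v;\Omega)}\leq CM/\log(e+1/v(E))^{q/\sigma'}$ are both correct (the latter is essentially Lemma~\ref{indicators} applied to $B(t)=t\log(e+t)^q$). But the proposal has a genuine gap at exactly the point you flag, and it is not a technicality that ``careful direct analysis'' can repair: the recursion
\[
\phi_{j+1}\leq C\,4^{\sigma(j+1)}\left(\frac{M}{k_0}\right)^{2\sigma}\frac{\phi_j}{\log(e+1/\phi_j)^{2q}}
\]
with $k_0\sim 1+\log M$ does not close, for structural reasons. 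The per-step constant $(M/k_0)^{2\sigma}\sim(M/\log M)^{2\sigma}$ is polynomially large in $M$, while the only gain is the factor $\log(e+1/\phi_j)^{-2q}$. Starting from $\phi_0\lesssim k_0^{-2\sigma}$, so that $\log(1/\phi_0)\sim\log\log M$, the first iterate is bounded only by $\phi_1\lesssim M^{2\sigma}(\log M)^{-4\sigma}(\log\log M)^{-2q}$, which exceeds $v(\Omega)$ for large $M$: the recursion establishes no decay at all. No redistribution of the levels helps, because the total height budget is $2k_0\sim\log M$ while the data estimate carries the full factor $M$ at every level; in Theorem~\ref{main0} the iteration closes only because the top level $r_0=\tau_0\|f\|_{L^A(v;\Omega)}$ is taken \emph{proportional} to $M$, cancelling that factor. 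A multiplicative structure of this kind cannot produce a logarithm, and $q>\sigma'$ is not a convergence threshold for such a recursion (in the paper it enters only through the choice $\epsilon=q/\sigma'-1>0$ of the level sequence).

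The missing idea is a change of unknown rather than a sharper iteration. The paper, following Xu~\cite{X}, applies the linear bound of Theorem~\ref{main0} not to $u$ but to $w=e^{\gamma u/p}-1$. By Lemma~\ref{AuxProb}, $w$ is a non-negative weak subsolution of $-\Div(Q\nabla w)=\frac{\gamma}{p}f(w+1)v$, and the exponential integrability estimate of Lemma~\ref{expint} (which uses $\|f\|_{L^{\sigma'}(v;\Omega)}=1$ together with the a priori boundedness of $u$ furnished by Theorem~\ref{main0}) controls $h=w+1$ in $L^p(v;\Omega)$ uniformly in $u$ and $f$. A three-exponent H\"older inequality with exponents $\bar b$, $b$, $p$ chosen as in Lemma~\ref{Gamma} absorbs the extra factor $h$, after which the De Giorgi iteration of Theorem~\ref{main0} runs verbatim for $w$ and yields $\|w\|_{L^\infty(v;\Omega)}\leq\tau_0\|f\|_{L^A(v;\Omega)}$, i.e.\ $e^{c\|u\|_{L^\infty(v;\Omega)}}\leq\tau_0(1+\|f\|_{L^A(v;\Omega)})$. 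Taking logarithms is what produces \eqref{eqn:fixed}; that step has no counterpart in an iteration performed directly on $u$.
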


  Theorem~\ref{main1} generalizes the main result of Xu~\cite{X}, but
  we note that there is a mistake in the statement of his main
  result.   Working in the same setting as
  Theorem~\ref{thm:classical}, he claims to show
  that
 \begin{equation} \label{eqn:xu}
\|u\|_{L^\infty(\Omega)} \leq
C\|f\|_{L^{\frac{n}{2}}(\Omega)}
\left(1+\log\left(1+{\|f\|_{L^q(\Omega)}}
   \right)\right),
\end{equation}
 where $q>\frac{n}{2}$.   However, a close examination of his proof
 shows that he only proves this when
 $\|f\|_{L^{\frac{n}{2}}(\Omega)}\geq 1$, and in fact what he proves
 is the analog of Theorem~\ref{main1}.   It is straightforward to see
 that \eqref{eqn:xu} cannot hold if
 $\|f\|_{L^{\frac{n}{2}}(\Omega)}<1$; if it did, then if we fix $f$
 and the corresponding solution $u$, then we could apply this
 inequality to $f/N$ ($N>1$ large) and $u/N$.  Then we could take the
 limit as $N\rightarrow \infty$ to conclude that $\|u\|_{L^\infty(\Omega)} \leq
C\|f\|_{L^{\frac{n}{2}}(\Omega)}$, which is false in general.  

\begin{rem}
  The ratio ${\|f\|_{L^A(v;\Omega)}}/
    {\|f\|_{L^{\sigma'}(v;\Omega)}}$ in Theorem~\ref{main1} measures
    how much bigger the Orlicz norm is than the associated Lebesgue
    space norm.   It is similar in spirit, though not in detail, to
    the ``entropy bump'' conditions introduced in the study of
    weighted norm inequalities in harmonic
    analysis~\cite{MR3357767, MR3539383}.
  \end{rem}
  \medskip

Our two main results are established via De Giorgi iteration on the
  level sets.  De Giorgi's original arguments are in
  \cite{DeGiorgi} but more helpful descriptions are found in
  \cite{MR1912731} and in~\cite{Korobenko:2016ue}, where
  De Giorgi iteration is applied in an infinitely degenerate elliptic
  regime.   We were unable to adapt Moser iteration to work in the
  context of Orlicz norms, and it remains an open question whether
  such an approach is possible in this setting.  

  \medskip


The remainder of the paper is organized as follows.  In
Section~\ref{section:prelim} we gather some preliminary results.  We
give a definition of Young functions and the associated Orlicz spaces, and
record some useful properties.  We then define weak
solutions to the Dirichlet problem. This definition has to
include  the possibility that the matrix $Q$ can be both degenerate
and singular, and    we give it in terms of a degenerate
Sobolev space, building upon results in~\cite{CRR1} and elsewhere.  We
prove a number of properties of weak derivatives in this setting;
we believe these results should be useful tools for other problems.  
We also prove that bounded, non-negative subsolutions of~\eqref{dp} must satisfy an
exponential integrability condition.  This result is a key lemma for
the proof of Theorem~\ref{main1} and is modeled on a similar result
due to Xu~\cite{X} in the classical setting.  For completeness we include the details of the proof.
In Section~\ref{section:main0} we prove Theorem~\ref{main0}; as noted
above, the proof
uses a version of De Giorgi iteration adapted to the scale of Orlicz
spaces.  This iteration argument was gotten by a careful adaptation of an
argument due Korbenko, {\em et al.}~\cite[Section~4.2]{Korobenko:2016ue}.
In
Section~\ref{section:main1} we prove Theorem~\ref{main1}; our proof is
a generalization of the argument in~\cite{X} and requires us to deal
with a number of technical obstacles.  Finally, in
Section~\ref{section:counter-example} we construct
Example~\ref{example:not-sharp-but-close}. 


\section{Preliminaries}
\label{section:prelim}

In this section we gather some preliminary definitions and results.
We begin with some notation.  The constant $n$ will always denote the
dimension of the underlying space $\rn$.  By $C$, $c$, etc. we will
mean a constant that may change from appearance to appearance, but
whose value depends only on the underlying parameters.  If we want to
specify this dependence, we will write, for instance, $C(n,p)$, etc.
If we write $A\lesssim B$, we mean that there exists a constant $c$
such that $A\leq cB$.  If $A\lesssim B$ and $B\lesssim A$, we write
$A\approx B$.

A weight $v$ will always be a
non-negative, measurable function such that $v\in L^1(\Omega)$.  Given
a set $E\subset \Omega$, $v(E)=\int_E v(x)\,dx$.
Given a weight $v$, $L^p(v;\Omega)$ is the collection of all those measurable functions $g:\Omega \rightarrow \mathbb{R}$ for which
$$\|g\|_p = \|g\|_{L^p(v;\Omega)} = \displaystyle\left(\int_\Omega |g(x)|^p~v(x)dx\right)^{1/p} <\infty.$$

\subsection*{Orlicz spaces}
Our main hypothesis on the data function $f$ in \eqref{dp} is that it
belongs to the Orlicz space $L^A(v;\Omega)$.  Here we gather some
essential results about these spaces but we assume the reader has some
familiarity with them.  For complete information, we
refer to \cite{KR,RR}.  For a briefer summary, see
\cite[Chapter~5]{CMP}.  

By a Young function we mean a function $A : [0,\infty)\rightarrow
[0,\infty)$ that is continuous, convex, strictly increasing, 
$A(0)=0$, and $\frac{A(t)}{t}\rightarrow \infty$ as $t\rightarrow
\infty$.   Given a Young
function $A$, define $L^A(v;\Omega)$ to be the Banach space of
measurable functions $h:\Omega\rightarrow \mathbb{R}$ equipped with the
Luxembourg norm,
$$\|h\|_A = \|h\|_{L^A(v;\Omega)} = \inf\bigg\{\lambda>0~:~\int_\Omega
A\left(\frac{|f(x)|}{\lambda}\right)~v(x)dx \leq 1\bigg\}<\infty.$$

Given Young functions $A,\,B$ we can compare the associated norms by
appealing to a point-wise estimate.  We say that $A(t)\preceq B(t)$ if
there is a $t_0>0$ and a constant $c\geq 1$ depending only on $A,\,B$ so
that $A(t) \leq B(ct)$ for $t\geq t_0$.    For a proof of the
following result, see~\cite[Theorem~13.3]{KR} or \cite[Section~5.1]{RR}.

\begin{lem}\label{normcompare} Given Young functions $A,\,B$, if
  $A\preceq B$, then there exists a constant $C=C(A,B,v(\Omega))$ such
  that for every $f\in L^B(v;\Omega)$,
$$\|f\|_{L^A(v;\Omega)} \leq C\|f\|_{L^B(v;\Omega)}.$$
\end{lem}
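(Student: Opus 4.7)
The plan is to work directly with the Luxemburg characterization of the norm. The case $\|f\|_{L^B(v;\Omega)} = 0$ is trivial (it forces $|f|v = 0$ a.e., hence $\|f\|_{L^A(v;\Omega)} = 0$), so it suffices to exhibit a constant $C = C(A,B,v(\Omega))$ such that
\[ \int_\Omega A\!\left(\frac{|f(x)|}{C\,\|f\|_{L^B(v;\Omega)}}\right) v(x)\,dx \leq 1, \]
since this immediately yields $\|f\|_{L^A(v;\Omega)} \leq C\|f\|_{L^B(v;\Omega)}$ from the definition of the Luxemburg norm.

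First I would promote the hypothesis $A \preceq B$ to a pointwise inequality valid for every $t \geq 0$: since $A(t) \leq B(ct)$ on $[t_0,\infty)$ by assumption and $A(t) \leq A(t_0)$ on $[0,t_0]$ by monotonicity, one obtains
\[ A(t) \leq B(ct) + A(t_0), \qquad t \geq 0. \]
Next I would substitute $t = |f(x)|/\lambda$ and integrate against $v(x)\,dx$. The natural choice $\lambda := c\,\|f\|_{L^B(v;\Omega)}$ makes $c|f|/\lambda$ equal to $|f|/\|f\|_{L^B(v;\Omega)}$, so the defining property of the Luxemburg norm supplies $\int_\Omega B(c|f|/\lambda)\, v\,dx \leq 1$, and combining this with the pointwise bound yields
\[ \int_\Omega A(|f|/\lambda)\, v(x)\,dx \leq 1 + A(t_0)\, v(\Omega) =: D. \]

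The final step is a convexity rescaling that absorbs the constant $D$ into the normalization. Since $D \geq 1$, the convexity of $A$ together with $A(0)=0$ gives $A(s/D) \leq A(s)/D$ for every $s \geq 0$; applying this with $\mu := D\lambda$ produces $\int_\Omega A(|f|/\mu) v\,dx \leq 1$, and hence
\[ \|f\|_{L^A(v;\Omega)} \leq \mu = c\bigl(1 + A(t_0) v(\Omega)\bigr)\,\|f\|_{L^B(v;\Omega)}, \]
which is the desired estimate with $C = c(1 + A(t_0)v(\Omega))$ depending only on $A$, $B$, and $v(\Omega)$. There is no real obstacle here beyond the bookkeeping of constants: the two ingredients are the additive pointwise domination of $A$ by $B$ and a single application of Young-function convexity to absorb the additive error.
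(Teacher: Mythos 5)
Your argument is correct and is precisely the standard proof of this comparison (the paper itself gives no proof, deferring to [KR, Theorem 13.3] and [RR, Section 5.1], where essentially this argument appears): upgrade $A\preceq B$ to the global bound $A(t)\leq B(ct)+A(t_0)$, integrate at $\lambda=c\|f\|_{L^B(v;\Omega)}$, and absorb the additive error $A(t_0)v(\Omega)$ by convexity. The only implicit step worth noting is that $\int_\Omega B(|f|/\|f\|_{L^B(v;\Omega)})v\,dx\leq 1$ (i.e., the infimum in the Luxemburg norm is attained), which follows from monotone convergence and is standard.
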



Given a Young function $A$, we define the conjugate Orlicz function,
$\bar{A}$, via the pointwise formula
\[ \bar{A}(t) = \sup\{ st-B(s) : s> 0 \}. \]
The pair $A,\,\bar{A}$ satisfy a  version of H\"older's
inequality in the scale of Orlicz spaces.  If $f\in L^A(v;\Omega)$ and
$g\in L^{\bar{A}}(v;\Omega)$, then $fg\in L^1(v;\Omega)$ and
\begin{equation} \label{holders}
\int_\Omega |f(x)g(x)|v(x)\,dx \leq 2\|f\|_A\|g\|_{\bar{A}}.
\end{equation}

\medskip

In our main results we consider  Young functions of the form
\begin{equation} \label{eqn:log-bump}
  B(t) = t^p\log(e+t)^q,
\end{equation}
where $1<p,\,q<\infty$.  The inverse and conjugate functions
associated with these Young functions are well-known:  see, for
instance, \cite{CMP}.   We have that
\begin{gather}
\label{YoungB2}\bar{B}(t) \approx \disp\frac{t^{p'}}{\log(t)^q} \approx \disp\frac{t^{p'}}{\log(e+t)^q}, \\ 
\label{YoungB-1}\bar{B}^{-1}(t) \approx t^{1/p'}\log(e+t)^{\frac{q}{p}},
\end{gather}
where the implicit constants depend on $p,\,q$. 
As a consequence of Lemma~\ref{normcompare} we have the following
estimate which we will need below; details are straightforward and are
omitted.

\begin{lem}\label{SCALE}
  Let $1\leq p_1\leq p_2<\infty$, $1\leq q_1\leq q_2<\infty$ and define
  \[ A(t) = t^{p_1}\log(e+t)^{q_1}, \qquad B(t) =
    t^{p_2}\log(e+t)^{q_2}. \]
  Then, given $f\in L^{B}(v;\Omega)$,
  $$\|f\|_{L^{p_1}(v,\Omega)}
  \lesssim \|f\|_{L^{A}(v;\Omega)}
  \lesssim \|f\|_{L^{p_2}(v;\Omega)}
  \lesssim \|f\|_{L^{B}(v;\Omega)}.$$
  The implicit constants depend on $p_i$ and $q_i$, $i=1,\,2$, and $v(\Omega)$.
\end{lem}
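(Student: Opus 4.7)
The plan is to reduce each of the three inequalities
$\|f\|_{L^{p_1}}\lesssim \|f\|_{L^A}$, $\|f\|_{L^A}\lesssim \|f\|_{L^{p_2}}$, and $\|f\|_{L^{p_2}}\lesssim \|f\|_{L^B}$
to a pointwise comparison of the form $A_1(t)\preceq A_2(t)$ of Young functions, and then to invoke Lemma \ref{normcompare}.

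For the first and third comparisons, the observation is that $\log(e+t)\geq 1$ for $t\geq 0$ and $q_i\geq 1$, so the trivial pointwise bounds $t^{p_1}\leq A(t)$ and $t^{p_2}\leq B(t)$ hold everywhere. When $p_i>1$, the map $t\mapsto t^{p_i}$ is itself a Young function and Lemma \ref{normcompare} applies directly. In the borderline case $p_i=1$, $t\mapsto t$ does not satisfy the growth condition $A(t)/t\to\infty$ used in the paper's definition of a Young function, so I would instead deduce the $L^1(v;\Omega)$ bound from the Orlicz H\"older inequality \eqref{holders} with $g\equiv 1$, using that $\|1\|_{L^{\bar A}(v;\Omega)}<\infty$ because $v(\Omega)<\infty$ and $\bar A(1/\lambda)\to 0$ as $\lambda\to\infty$.

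The middle comparison is the only step of real substance. The strategy is to absorb the logarithmic factor into a small power: for every $\epsilon>0$ there exist $t_0=t_0(\epsilon,q_1)$ and $C_\epsilon>0$ with $\log(e+t)^{q_1}\leq C_\epsilon t^\epsilon$ for $t\geq t_0$, so that $A(t)\leq C_\epsilon t^{p_1+q_1\epsilon}$ there. Choosing $\epsilon$ small enough that $p_1+q_1\epsilon\leq p_2$ gives $A\preceq t^{p_2}$, and Lemma \ref{normcompare} closes the step. This is the main obstacle in the sense that it plainly requires $p_1<p_2$ strictly: if $p_1=p_2$, the log factor cannot be absorbed into a power and in fact $L^{p_2}$ fails to embed into $L^A$. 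Under the (implicit) hypothesis $p_1<p_2$, however, the entire chain of inequalities is routine, which is presumably why the authors omit the details.
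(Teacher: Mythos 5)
Your proof is correct and is precisely the argument the paper intends (the paper omits the details, presenting the lemma as a consequence of Lemma~\ref{normcompare}): each inequality reduces to a pointwise domination $A_1\preceq A_2$ of Young functions, with the borderline case $p_i=1$ handled via the Orlicz--H\"older inequality \eqref{holders}. Your further observation is also correct: the middle inequality $\|f\|_{L^{A}(v;\Omega)}\lesssim\|f\|_{L^{p_2}(v;\Omega)}$ genuinely requires $p_1<p_2$, so the stated hypothesis $p_1\le p_2$ is too weak as written, though this is harmless for the paper, which only ever invokes the outer inequalities, where the logarithmic factor works in the favorable direction.
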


We conclude this section with an estimate for the
$L^{\bar{B}}(\Omega)$ norm of an indicator function
$\mathbbm{1}_S$ for $S\subset\Omega$; this quantity plays an
essential role in our proofs of Theorems \ref{main0} and \ref{main1}.
This computation is well-known, but to make clear the dependence on
the constants we include its short proof.


\begin{lem}\label{indicators}
Given the Young function $B$ defined by~\eqref{eqn:log-bump} then for
any  $S\subset\Omega$, $v(S)>0$,
\begin{equation}\label{indicator}
  \|\mathbbm{1}_S\|_{L^{\bar{B}}(v;\Omega)}
  \leq \disp\frac{cv(S)^{\frac{1}{p'}}}
  {\log(1+v(S)^{-1})^{\frac{q}{p}}},
\end{equation}
where $c=c(p,q)>0$.
\end{lem}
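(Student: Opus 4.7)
The plan is to compute the Luxemburg norm of $\mathbbm{1}_S$ directly, exploiting the fact that the defining integral collapses to a single evaluation of $\bar{B}$.  For any $\lambda > 0$,
\[
\int_\Omega \bar{B}\!\left(\frac{\mathbbm{1}_S(x)}{\lambda}\right) v(x)\,dx = \bar{B}(1/\lambda)\,v(S),
\]
since $\bar{B}(0)=0$.  Because $\bar{B}$ is a Young function and is therefore continuous, strictly increasing, and unbounded, there is a unique $\lambda^\star > 0$ at which $\bar{B}(1/\lambda^\star)\,v(S) = 1$, and by continuity and monotonicity this $\lambda^\star$ is precisely the infimum defining the Luxemburg norm.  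Equivalently,
\[
\|\mathbbm{1}_S\|_{L^{\bar{B}}(v;\Omega)} = \frac{1}{\bar{B}^{-1}(1/v(S))}.
\]

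Next I would invoke the asymptotic equivalence \eqref{YoungB-1} at the point $t = 1/v(S)$: there exists a constant $c_1 = c_1(p,q) > 0$ such that
\[
\bar{B}^{-1}\!\left(\frac{1}{v(S)}\right) \geq c_1\, v(S)^{-1/p'}\,\log\!\bigl(e+v(S)^{-1}\bigr)^{q/p}.
\]
Substituting this into the previous display and then using the trivial estimate $\log(e+x) \geq \log(1+x)$ for all $x \geq 0$, we obtain
\[
\|\mathbbm{1}_S\|_{L^{\bar{B}}(v;\Omega)} \leq \frac{1}{c_1}\cdot\frac{v(S)^{1/p'}}{\log(e+v(S)^{-1})^{q/p}} \leq \frac{c\,v(S)^{1/p'}}{\log(1+v(S)^{-1})^{q/p}},
\]
with $c = c(p,q) > 0$, which is the desired estimate \eqref{indicator}.

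No step here presents a genuine obstacle: this is essentially the textbook evaluation of the Orlicz norm of a characteristic function, and the only bookkeeping is tracking the constants arising from the pointwise equivalences \eqref{YoungB2}--\eqref{YoungB-1}.  The whole argument depends only on the scalar-valued nature of $\mathbbm{1}_S/\lambda$, and in particular it requires no hypothesis on $S$ beyond $0 < v(S) \leq v(\Omega) < \infty$.
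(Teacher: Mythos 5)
Your proposal is correct and follows essentially the same route as the paper: both evaluate the Luxemburg norm of $\mathbbm{1}_S$ exactly as $[\bar{B}^{-1}(1/v(S))]^{-1}$ by collapsing the defining integral to $\bar{B}(1/\lambda)v(S)$, and then apply the asymptotic formula \eqref{YoungB-1} for $\bar{B}^{-1}$ to obtain \eqref{indicator}. The only cosmetic difference is that the paper phrases the identification of the norm via the admissible set $F$ and its minimum $m_0$, while you argue directly by monotonicity and continuity; the content is identical.
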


\begin{proof}  Given $B$, $\bar{B}$ is defined
  by~\eqref{YoungB2}.   Set
  $$F=\left\{\lambda>0~:~ \int_\Omega
    \bar{B}\left(\frac{\mathbbm{1}_S(x)}{\lambda}\right)v(x)\,dx \leq
    1\right\}$$

  and notice that $F\neq \emptyset$.  For each $\lambda \in F$,
\[ 
  v(S)~\bar{B}\left(\frac{1}{\lambda}\right)
  = \int_\Omega \bar{B}
  \left(\frac{\mathbbm{1}_S(x)}{\lambda}\right)v(x)\,dx
  \leq 1.
\]
Since $\bar{B}$ is invertible and increasing,
$$\lambda \geq \left[\bar{B}^{-1}\left(\frac{1}{v(S)}\right)\right]^{-1}=m_0>0.$$
Again by  the invertibility of $\bar{B}$, 
\[
  \int_\Omega \bar{B}\left(\frac{\mathbbm{1}_S(x)}{m_0}\right)v(x)\,dx
  = v(S)\bar{B}(m_0^{-1}) = 1. 
\]
Hence, $m_0\in F$, and it follows that
$\|\mathbbm{1}_S\|_{L^{\bar{B}}(\Omega)} = m_0$.  By
inequality~\eqref{YoungB-1},
$$m_0=\bar{B}^{-1}\left(\frac{1}{v(S)}\right)
\geq c(p,q)v(S)^{-\frac{1}{p'}}\log(e+v(S)^{-1})^{\frac{q}{p}}$$
and \eqref{indicator} follows.
\end{proof}
%


\subsection*{Degenerate Sobolev spaces and weak solutions}
We now give a precise definition of weak
(sub)solutions to the Dirichlet problem~\eqref{dp}.  This question has
been explored in a number of papers by ourselves and others:
see~\cite{CW,CRR1,CRR2,GT,MR,MRW,R,SW2}.  Here we sketch the relevant details.

Given a non-negative definite, symmetric and measurable matrix
function $Q$ on $\Omega$ and a weight $v \in L^1_{loc}(\Omega)$, the
solution space for the Dirichlet problem is the matrix weighted
Sobolev space $QH^1_0(v;\Omega)$.  This space is defined as the
abstract completion (in terms of Cauchy sequences) of the space
$Lip_0(\Omega)$ (i.e., Lipschitz functions with compact support in $\Omega$) with respect to the norm
$$\|\psi\|_{QH^1_0(v;\Omega)} = \|\psi\|_{L^2(v;\Omega)} + \|\nabla \psi\|_{L^2_Q(\Omega)},$$
where $L^2_Q(\Omega)$ is the Banach space of $\mathbb{R}^n$
vector-valued functions ${\bf g}$ on $\Omega$ that satisfy
$$\|{\bf g}\|_{L^2_Q(\Omega)}
= \bigg(\int_\Omega |\sqrt{Q(x)}{\bf g}(x)|^2~dx\bigg)^{\frac{1}{2}}<\infty.$$
This norm is well defined for $\psi\in Lip_0(\Omega)$  provided
$|Q|_{op}\in L_{loc}^1(\Omega)$; in particular, $QH^1_0(v;\Omega)$ is
well defined if the first assumption in Hypothesis~\ref{sobolev}
holds.

With this definition, the Sobolev space
$QH^1_0(v;\Omega)$ is a collection of equivalence classes of Cauchy
sequences of $Lip_0(\Omega)$ functions.  However, the spaces $L^2(v;\Omega)$ and
$ L^2_Q(\Omega)$ are complete:  for a proof that  $L^2_Q(\Omega)$ is
complete,  see~\cite{SW2}
or~\cite{CRR1} where it was proved that $L^p_Q(\Omega)$ is complete
for $1\leq p<\infty$.   Therefore, to each equivalence class
$[\{\psi_j\}]$ in  $QH^1_0(v;\Omega)$ we can associate a unique pair
${\bf u}=(u, {\bf g}) \in L^2(v;\Omega)\times L^2_Q(\Omega)$ whose
norm is given by
\begin{align*}
  \|{\bf u}\|_{QH^1_0(v;\Omega)}
  &= \|u\|_{L^2(v;\Omega)} + \|{\bf g}\|_{L^2_Q(\Omega)}\\
  &=\displaystyle\lim_{j\rightarrow\infty}\left(\|\psi_j\|_{L^2(v;\Omega)}
    + \|\nabla \psi_j\|_{L^2_Q(\Omega)}\right).
\end{align*}
Conversely, given a pair $(u,{\bf g})$ we will say that it is in
$QH^1_0(v;\Omega)$ if there exists a sequence $\{u_j\}_j\subset
Lip_0(\Omega)$ such that $(u_j \nabla u_j)$ converges to $(u,{\bf g})$
in $L^2(v;\Omega)\times L^2_Q(\Omega)$.

Hereafter, we will denote ${\bf g}$ by $\nabla u$ since $\bf g$ plays
the role of a weak gradient of $u$.  However, while we
adopt this formal notation we want to stress that the function ${\bf g}$ is
not the weak gradient of $u$ in the sense of classical Sobolev
spaces.  For further details, ~\cite{CRR1} contains the construction of $QH^{1,p}(v;\Omega)$ for $p\geq 1$.  Additionally, unweighted constructions of $QH^{1,p}_0(1;\Omega)$ are found in~\cite{CRR2,MRW} for $p\geq 1$ and~\cite{MR,R} for $p=2$.

We can extend the second assumption in Hypothesis \ref{sobolev} to functions in
$QH^1_0(\Omega)$; this follows from density of $Lip_0(\Omega)$
functions and we omit the proof.


\begin{lem}\label{sobolev2}
Suppose the Sobolev inequality of Hypothesis~\ref{sobolev} holds.  Then,
\begin{equation}\label{sob2}
  \left(\int_\Omega | w(x) |^{2\sigma}v(x)\,dx
  \right)^{\frac{1}{2\sigma}}
  \leq C_0\left(\int_\Omega \left|\sqrt{Q(x)}\nabla w\right|^2\,dx
  \right)^{\frac{1}{2}} 
\end{equation}
for every ${\bf w}=(w,\nabla w)\in QH^1_0(v;\Omega)$ where $C_0$
is the same as in Hypothesis~\ref{sobolev}.
\end{lem}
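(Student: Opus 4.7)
The plan is a straightforward density argument, but one must be a little careful about how the pair $(w,\nabla w)$ is defined as an abstract limit in $L^2(v;\Omega)\times L^2_Q(\Omega)$. By the definition of $QH^1_0(v;\Omega)$ recalled just above, given $\mathbf{w}=(w,\nabla w)\in QH^1_0(v;\Omega)$ there exists a sequence $\{\psi_j\}\subset Lip_0(\Omega)$ with $\psi_j\to w$ in $L^2(v;\Omega)$ and $\nabla\psi_j\to\nabla w$ in $L^2_Q(\Omega)$. Since each $\psi_j\in Lip_0(\Omega)$, Hypothesis~\ref{sobolev} applies to each $\psi_j$, so the task is purely to pass to the limit on both sides of \eqref{sob}.

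First I would upgrade the convergence $\psi_j\to w$ from $L^2(v;\Omega)$ to $L^{2\sigma}(v;\Omega)$. Applying \eqref{sob} to the differences $\psi_j-\psi_k\in Lip_0(\Omega)$ gives
\begin{equation*}
\|\psi_j-\psi_k\|_{L^{2\sigma}(v;\Omega)}\leq C_0\,\|\sqrt{Q}\,(\nabla\psi_j-\nabla\psi_k)\|_{L^2(\Omega)}.
\end{equation*}
Because $\{\nabla\psi_j\}$ is Cauchy in $L^2_Q(\Omega)$, the right-hand side tends to $0$ as $j,k\to\infty$. Thus $\{\psi_j\}$ is Cauchy in the complete space $L^{2\sigma}(v;\Omega)$ and therefore converges to some $\tilde w\in L^{2\sigma}(v;\Omega)$.

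Next I would identify $\tilde w$ with $w$. Since $\psi_j\to w$ in $L^2(v;\Omega)$ and $\psi_j\to\tilde w$ in $L^{2\sigma}(v;\Omega)$, after extracting a common subsequence (first from the $L^2(v;\Omega)$ convergence, then from the $L^{2\sigma}(v;\Omega)$ convergence) we obtain $v$-a.e.\ pointwise convergence to both $w$ and $\tilde w$, so $w=\tilde w$ in $L^{2\sigma}(v;\Omega)$. In particular $w\in L^{2\sigma}(v;\Omega)$ and $\psi_j\to w$ in this norm.

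Finally, with both-sided norm convergence in hand, I would simply pass to the limit in the inequality $\|\psi_j\|_{L^{2\sigma}(v;\Omega)}\leq C_0\,\|\sqrt{Q}\,\nabla\psi_j\|_{L^2(\Omega)}$ to obtain \eqref{sob2} with the same constant $C_0$. The only mildly delicate point, and the reason the result is not quite automatic, is the identification step: elements of $QH^1_0(v;\Omega)$ are equivalence classes of Cauchy sequences in a product norm, and one must verify that the first-coordinate limit $w$ really coincides with the $L^{2\sigma}(v;\Omega)$ limit produced by the Sobolev inequality. This is handled by the a.e.\ subsequence extraction described above. Alternatively, one can apply \eqref{sob} to $\psi_j$ and invoke Fatou's lemma along an a.e.\ convergent subsequence, which avoids even needing the Cauchy argument but gives the same conclusion.
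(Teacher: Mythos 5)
Your argument is correct and is exactly the routine density argument that the paper itself omits (the authors state only that the lemma ``follows from density of $Lip_0(\Omega)$ functions''). Applying \eqref{sob} to differences $\psi_j-\psi_k$ to get Cauchyness in $L^{2\sigma}(v;\Omega)$, identifying the limit with $w$ by $v$-a.e.\ subsequence extraction, and passing to the limit is the standard and intended proof; the Fatou variant you mention at the end works equally well.
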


\medskip

We can now define the weak solution to the Dirichlet problem. 

\begin{defn}\label{weaksol}
  A pair ${\bf u} = (u,\nabla u)\in QH^1_0(\Omega)$ is said to be
  a weak solution of the Dirichlet problem \eqref{dp}
  if
  $$\int_\Omega \nabla \psi(x) \cdot Q(x)\nabla u (x) \,dx
  = \int_\Omega f(x)\psi(x)v(x)\,dx$$
for every  $\psi\in Lip_0(\Omega)$.  The pair is said to be a
non-negative weak subsolution  if $u(x)\geq 0$ $v$-a.e. and
$$\int_\Omega \nabla \psi(x) \cdot Q(x)\nabla u (x) \,dx
\leq  \int_\Omega f(x)\psi(x)v(x)\,dx$$
for every non-negative  $\psi\in Lip_0(\Omega)$. 
\end{defn}

Note that if ${\bf h} = (h,\nabla h)\in QH^1_0(v;\Omega)$ with $h(x)\geq 0$ $v$-a.e., then by a
standard limiting argument we may use $h$ as our test function in
Definition~\ref{weaksol}.

\begin{rem}
  The existence of weak solutions to \eqref{dp} when  $v=1$ was studied in
  \cite{GT,MR,R}, and when   $v(x)=|Q(x)|_{op}$ in~\cite{CRR1,CRR2}. 
\end{rem}

\subsection*{Properties of weak gradients}
We now develop some useful properties of functions in the degenerate
Sobolev space $QH^1_0(v;\Omega)$.  All of these properties are well
known in the classical case:  see, for instance,~\cite{GT}.   In the
degenerate case, we stress that the first assumption in
Hypothesis~\ref{sobolev} is critical in proving these results and
throughout this subsection we assume that $v\in L^1(\Omega)$ and
$|Q|_{op}\leq kv$ a.e.

Our first result shows that weak gradients are zero almost everywhere
on sets of $v$-measure zero.


\begin{lem}\label{meas0} Let ${\bf u}=(u,\nabla u)\in
  QH^1_0(v;\Omega)$ and $w\in Lip_0(\Omega)$.  Then, given any set $E$
  of $v$-measure zero, we have that:
\begin{enumerate}
\item $\|\nabla w\|_{L^2_Q(E)}=0$;
\item $\|\nabla u\|_{L^2_Q(E)} = 0$;
\item $\sqrt{Q(x)}\nabla u(x)=0=\sqrt{Q(x)}\nabla w(x)$ a.e. $x\in E$. 
\end{enumerate}
\end{lem}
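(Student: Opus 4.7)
The plan is to exploit the pointwise domination $|Q|_{op}\leq kv$, which converts integrals of $|\sqrt{Q}\nabla\varphi|^2$ on a set into weighted integrals controlled by $v$-measure. Once (1) is established for Lipschitz functions, (2) follows by approximation via the defining Cauchy sequence, and (3) is an immediate pointwise consequence of the vanishing of the $L^2_Q$-integral over $E$.

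First, I would prove (1). Since $w\in Lip_0(\Omega)$, the classical gradient $\nabla w$ is an essentially bounded measurable function; let $M=\|\nabla w\|_{L^\infty}$. Using that $|\sqrt{Q(x)}\xi|^2=\langle Q(x)\xi,\xi\rangle\leq |Q(x)|_{op}|\xi|^2$ together with Hypothesis \ref{sobolev}, I would estimate
\[
\|\nabla w\|_{L^2_Q(E)}^2=\int_E |\sqrt{Q(x)}\nabla w(x)|^2\,dx \leq \int_E |Q(x)|_{op}|\nabla w(x)|^2\,dx \leq kM^2\, v(E)=0.
\]

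Next, for (2), let $\{w_j\}\subset Lip_0(\Omega)$ be a Cauchy sequence representing $\mathbf{u}$, so that $w_j\to u$ in $L^2(v;\Omega)$ and $\nabla w_j\to \nabla u$ in $L^2_Q(\Omega)$. Since the restriction of the $L^2_Q$ norm to the measurable set $E$ is still a (semi)norm and the convergence is global, I would use the triangle inequality to write
\[
\|\nabla u\|_{L^2_Q(E)} \leq \|\nabla u-\nabla w_j\|_{L^2_Q(E)}+\|\nabla w_j\|_{L^2_Q(E)}\leq \|\nabla u-\nabla w_j\|_{L^2_Q(\Omega)}+0,
\]
where the second term vanishes by part (1). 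Letting $j\to\infty$ yields $\|\nabla u\|_{L^2_Q(E)}=0$.

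Finally, (3) is essentially a statement about when a nonnegative integrand has zero integral. From (1) and (2), $\int_E |\sqrt{Q(x)}\nabla u(x)|^2\,dx=0$ and similarly for $w$, so the nonnegative integrands $|\sqrt{Q}\nabla u|^2$ and $|\sqrt{Q}\nabla w|^2$ vanish almost everywhere on $E$ in the ordinary Lebesgue sense, which is exactly the stated conclusion. The only subtle point to flag is that the $L^2_Q$ space is built from equivalence classes of vector fields modulo $dx$-null sets (not $v$-null sets), so the pointwise conclusion in (3) is with respect to Lebesgue measure on $E$; this is consistent with the statement since it asserts the vanishing a.e. on $E$. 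There is no real obstacle in this argument provided the first clause of Hypothesis \ref{sobolev} is in force; that hypothesis is precisely what permits the opening estimate bounding $|\sqrt{Q}\nabla w|^2$ by $kv|\nabla w|^2$, and without it the result would fail in general.
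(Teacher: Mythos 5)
Your proof is correct and follows essentially the same route as the paper's: the pointwise bound $|\sqrt{Q}\nabla w|^2\leq |Q|_{op}|\nabla w|^2\leq kv|\nabla w|^2$ together with boundedness of $\nabla w$ gives (1), approximation by the defining Lipschitz sequence gives (2), and (3) is the standard fact that a nonnegative function with zero integral vanishes a.e.
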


\begin{proof} If $w\in Lip_0(\Omega)$,   then $\nabla w$ is
defined a.e. in $\Omega$ by the Rademacher-Stepanov theorem and is in
$L^\infty$.  Therefore,  for a.e. $x\in \Omega$,
\[|\sqrt{Q(x)}\nabla w(x)| \leq |Q(x)|_{op}^{\frac12} |\nabla w(x)|
  \leq c v(x)^{\frac12}  |\nabla w(x)|;\]
hence,
\[\|\nabla w(x)\|_{L^2_Q(E)}^2 \leq \|\nabla w\|_\infty^2 v(E) = 0,\]
which proves (1).

Let ${\bf u} \in QH^1_0(v;\Omega)$; then there exists  a sequence
$\{w_j\}_j\subset Lip_0(\Omega)$ such that  $\nabla w_j \rightarrow
\nabla u$ in $L^2_Q(\Omega)$.  Then by the previous argument,
\[\|\nabla u\|_{L^2_Q(E)} =
  \lim_{j\rightarrow\infty}\|\nabla w_j\|_{L^2_Q(E)} = 0,\]
and so (2) holds.

Finally, (3) follows immediately from (1) and (2).
\end{proof}

Our second result shows that non-negative truncations of functions in
$QH^1_0(v;\Omega)$ are again in this space.

\begin{lem} \label{lemma:main1-lemma}
  Let ${\bf u}=(u,\nabla u)\in
  QH^1_0(v;\Omega)$ and fix $r>0$.  If $S(r)=\{ x\in \Omega : u(x)>r \}$,
  then $((u-r)_+, \mathbbm{1}_{S(r)}\nabla u) \in QH^1_0(v;\Omega)$.  
\end{lem}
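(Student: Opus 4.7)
The strategy is to exhibit an approximating sequence in $Lip_0(\Omega)$ for the pair $((u-r)_+,\mathbbm{1}_{S(r)}\nabla u)$ by combining two levels of approximation: replacing $u$ by Lipschitz functions from the definition of $QH^1_0(v;\Omega)$, and replacing the truncation $(\,\cdot\,-r)_+$ by the shifted Lipschitz truncations $(\,\cdot\,-r-\epsilon)_+$ for small $\epsilon>0$. The shift is used to circumvent the discontinuity of the derivative of the truncation at the level $r$.

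Fix a sequence $\{u_j\}\subset Lip_0(\Omega)$ with $(u_j,\nabla u_j)\to(u,\nabla u)$ in $L^2(v;\Omega)\times L^2_Q(\Omega)$, and pass to a subsequence (still denoted $u_j$) so that $u_j\to u$ $v$-a.e. For each $\epsilon>0$, the function $(u_j-r-\epsilon)_+$ belongs to $Lip_0(\Omega)$ (its support lies in $\supp u_j$), with classical gradient $\mathbbm{1}_{\{u_j>r+\epsilon\}}\nabla u_j$ a.e. Since $v(\Omega)<\infty$ and the level sets $\{u=r+\epsilon\}$ are disjoint as $\epsilon$ varies, all but countably many $\epsilon>0$ satisfy $v(\{u=r+\epsilon\})=0$; choose $\epsilon_n\downarrow 0$ from this admissible set.

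For fixed $n$, I claim $((u_j-r-\epsilon_n)_+,\mathbbm{1}_{\{u_j>r+\epsilon_n\}}\nabla u_j)\to((u-r-\epsilon_n)_+,\mathbbm{1}_{\{u>r+\epsilon_n\}}\nabla u)$ as $j\to\infty$. The $L^2(v;\Omega)$ component is immediate from the $1$-Lipschitz estimate $|(a-r-\epsilon_n)_+-(b-r-\epsilon_n)_+|\leq|a-b|$. For the $L^2_Q(\Omega)$ component, split
\[
\mathbbm{1}_{\{u_j>r+\epsilon_n\}}\nabla u_j-\mathbbm{1}_{\{u>r+\epsilon_n\}}\nabla u
=\mathbbm{1}_{\{u_j>r+\epsilon_n\}}(\nabla u_j-\nabla u)+\bigl(\mathbbm{1}_{\{u_j>r+\epsilon_n\}}-\mathbbm{1}_{\{u>r+\epsilon_n\}}\bigr)\nabla u.
\]
The first term has $L^2_Q$ norm at most $\|\nabla u_j-\nabla u\|_{L^2_Q(\Omega)}\to 0$. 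The second is controlled by dominated convergence: the integrand is dominated by $|\sqrt{Q}\,\nabla u|^2\in L^1(\Omega)$ and tends to zero $dx$-a.e., using $u_j\to u$ $dx$-a.e.\ on $\{v>0\}$ together with Lemma~\ref{meas0} to kill $\sqrt{Q}\,\nabla u$ on the $v$-null sets $\{v=0\}$ and $\{u=r+\epsilon_n\}$.

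Letting $n\to\infty$, the bound $|(t-r-\epsilon_n)_+-(t-r)_+|\leq\epsilon_n$ gives $L^2(v;\Omega)$ convergence of the first coordinate, while $\mathbbm{1}_{\{u>r+\epsilon_n\}}\to\mathbbm{1}_{S(r)}$ pointwise everywhere (including on $\{u=r\}$, where both are zero), so dominated convergence gives $\mathbbm{1}_{\{u>r+\epsilon_n\}}\nabla u\to\mathbbm{1}_{S(r)}\nabla u$ in $L^2_Q(\Omega)$. A diagonal extraction $j=j_n$ then produces a sequence in $Lip_0(\Omega)$ whose pairs converge to $((u-r)_+,\mathbbm{1}_{S(r)}\nabla u)$, placing this pair in $QH^1_0(v;\Omega)$. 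The main obstacle is precisely the potential discontinuity of $\mathbbm{1}_{\{u_j>r\}}$ on the level set $\{u=r\}$: using $(u_j-r)_+$ directly would require a Stampacchia-type identity $\sqrt{Q}\,\nabla u=0$ a.e.\ on $\{u=r\}$, which is not available in this degenerate generality. The shift $\epsilon>0$, chosen from the co-countable set avoiding level sets of positive $v$-measure, combined with Lemma~\ref{meas0}, is what sidesteps this gap.
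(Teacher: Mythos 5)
Your proof is correct, and it takes a genuinely different route from the paper's at the one delicate point. The paper approximates directly with $(u_j-r)_+$, asserts that $\mathbbm{1}_{S_j}\to\mathbbm{1}_{S}$ $v$-a.e.\ where $S_j=\{u_j>r\}$, and then uses the same splitting of $\mathbbm{1}_{S_j}\nabla u_j-\mathbbm{1}_{S}\nabla u$ and the same combination of Lemma~\ref{meas0} with dominated convergence that you use. The difference is your $\epsilon$-shift: you replace the truncation level $r$ by $r+\epsilon_n$ with $\epsilon_n$ drawn from the co-countable set of levels satisfying $v(\{u=r+\epsilon_n\})=0$, prove convergence for each fixed $n$, and then diagonalize. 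What this buys is a rigorous treatment of the level set $\{u=r\}$: the paper's claim that the indicators converge $v$-a.e.\ can fail there, since at a point with $u(x)=r$ one may have $u_j(x)>r$ infinitely often even though $\mathbbm{1}_{S}(x)=0$, and in this degenerate setting there is no Stampacchia-type identity forcing $\sqrt{Q}\nabla u=0$ a.e.\ on $\{u=r\}$. Your argument shows the lemma holds as stated without needing any hypothesis on $v(\{u=r\})$, so it in fact repairs (or at least makes explicit) a step the paper's proof glosses over; the cost is the extra layer of approximation and the diagonal extraction. All the supporting details you invoke check out: $(u_j-r-\epsilon_n)_+\in Lip_0(\Omega)$ with the stated a.e.\ gradient, the $1$-Lipschitz bound for the first coordinate, the domination by $|\sqrt{Q}\nabla u|^2\in L^1(\Omega)$, and the monotone pointwise convergence $\mathbbm{1}_{\{u>r+\epsilon_n\}}\to\mathbbm{1}_{S(r)}$ in the final limit.
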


\begin{proof}
  By the definition of $QH^1_0(v;\Omega)$ there exists a sequence
  $\{u_j\}_j$ in $Lip_0(\Omega)$ such that $u_j\rightarrow u$ in
  $L^2(v;\Omega)$ and $\nabla u_j \rightarrow \nabla u$ in
  $L^2_Q(\Omega)$. If we pass to a subsequence, we may assume that
  $u_j\rightarrow u$ pointwise $v$-a.e.  We will first prove that
  $(u_j-r)_+\rightarrow (u-r)_+$ in $L^2(v;\Omega)$.

  Define $f_j= |(u_j-r)_+ - (u-r)_+|^2$; then $f_j\rightarrow 0$
  $v$-a.e.  We will show that $f_j$ converges to $0$ in
  $L^1(v;\Omega)$; this follows from the generalized dominated convergence
  theorem~\cite[p.~59]{MR1681462} if we  show that there
  exist non-negative functions $g_j,\,g \in L^1(v; \Omega)$ such that
  $f_j\leq g_j$ and $\|g_j\|_{L^1(v;\Omega)}\rightarrow
  \|g\|_{L^1(v;\Omega)}$ as $j\rightarrow \infty$.  But we have that
\[ 
    f_j \leq 2(u_j-r)^2_+ +2(u-r)_+^2 \leq 4(|u_j|^2+r^2)+4(|u|^2+r^2)
    = g_j.  \]
Moreover, $g_j$ converges pointwise a.e. to $g=8(|u|^2+r^2)$, and
$g,\,g_j\in L^1(v;\Omega)$ since $v(\Omega)<\infty$.  Finally, since
$u_j\rightarrow u$ in $L^2(v;\Omega)$ we get that $\|g_j\|_{L^1(v;\Omega)}\rightarrow
  \|g\|_{L^1(v;\Omega)}$.

\smallskip

Now define $S_j=\{ x\in \Omega : u_j (x)>r \}$; then
$\mathbbm{1}_{S_j}\rightarrow \mathbbm{1}_{S}$ $v$-a.e. Moreover, we
have that $\nabla(u_j-r)_+ = \nabla u_j \mathbbm{1}_{S_j}$
a.e.~\cite[Lemma~7.6]{GT} and so $v$-a.e.  By passing to another
subsequence, we assume that
$\nabla u_j \mathbbm{1}_{S_j}\rightarrow \nabla u \mathbbm{1}_{S}$
pointwise $v$-a.e.  We claim that they converge in $L^2_Q(\Omega)$ as
well.  If this is the case, then we have shown that
$((u_j-r)_+, \nabla(u_j-r)_+)$ is Cauchy in $QH^1_0(v;\Omega)$, and
the desired conclusion follows at once.

To prove $L^2_Q(\Omega)$ convergence, note that
\[ \|\nabla u_j \mathbbm{1}_{S_j}- \nabla u
  \mathbbm{1}_{S}\|_{L^2_Q(\Omega)}
    \leq 
    \|\nabla u_j \mathbbm{1}_{S_j}- \nabla u
    \mathbbm{1}_{S_j}\|_{L^2_Q(\Omega)}
      +
    \|\sqrt{Q}\nabla
    u(\mathbbm{1}_{S_j}-\mathbbm{1}_{S})\|_{L^2(\Omega)}. \]
The first term on the right-hand side is less than $\|\nabla u_j -
\nabla u\|_{L^2_Q(\Omega)}$ which goes to $0$ as $j\rightarrow
\infty$.  To estimate the second term, let $E$ be the set of $x\in
\Omega$ where
$\mathbbm{1}_{S_j}(x)$  does not converge to $\mathbbm{1}_{S}(x)$.  Then
$v(E)=0$, and so by Lemma~\ref{meas0},
\[ \int_E |\sqrt{Q}\nabla u (\mathbbm{1}_{S_j}-\mathbbm{1}_{S})|^2\,dx
  \leq  \int_E |\sqrt{Q}\nabla u|^2 \,dx = 0.  \]
Since $|\sqrt{Q}\nabla u (\mathbbm{1}_{S_j}-\mathbbm{1}_{S})|\leq
|\sqrt{Q}\nabla u|\in L^2(\Omega)$, by the dominated convergence
theorem we have that as $j\rightarrow 0$,
\[ \|\sqrt{Q}\nabla
  u(\mathbbm{1}_{S_j}-\mathbbm{1}_{S})\|_{L^2(\Omega)}
  = \|\sqrt{Q}\nabla
    u(\mathbbm{1}_{S_j}-\mathbbm{1}_{S})\|_{L^2(\Omega\setminus E)}
    \rightarrow 0. \]
\end{proof}



Our next lemma proves the existence of an approximating sequence of
Lipschitz functions with some additional useful properties. 

\begin{lem} \label{specialapprox}
Let ${\bf u} = (u,\nabla u)\in QH^1_0(v;\Omega)$ with $u\in
L^\infty(v;\Omega)$ and $u\geq 0$ $v$-a.e. Then there exists a
sequence $\{u_j\}_j\in Lip_0(\Omega)$ such that:
\begin{enumerate}
\item $0\leq u_j(x)\leq \|u\|_{L^\infty(v;\Omega)}+1$ in $\Omega$;
\item $u_j\rightarrow u$ $v$-a.e. and also in $L^2(v;\Omega)$;
\item $\nabla u_j \rightarrow \nabla u$ in $L^2_Q(\Omega)$ and
  $\left|\sqrt{Q}\nabla u_j\right| \rightarrow \left|\sqrt{Q}\nabla
    u\right|$ pointwise a.e.;
\item $\|\nabla u_j\|_{L^2_Q(\Omega)}\leq \|\nabla u\|_{L^2_Q(\Omega)}+1$ for each $j\in \mathbb{N}$.
\end{enumerate}
\end{lem}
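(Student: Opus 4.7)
The plan is to pick any Lipschitz approximating sequence for $\mathbf{u}$ provided by the definition of $QH^1_0(v;\Omega)$, apply the double truncation $\min(\max(\cdot,0),M+1)$ with $M=\|u\|_{L^\infty(v;\Omega)}$, and verify that this truncation preserves the approximation properties.

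By definition of $QH^1_0(v;\Omega)$, choose $\{w_j\}\subset Lip_0(\Omega)$ with $w_j\to u$ in $L^2(v;\Omega)$ and $\nabla w_j\to\nabla u$ in $L^2_Q(\Omega)$; after passing to a subsequence, also assume $w_j\to u$ pointwise $v$-a.e.\ and $|\sqrt{Q}\nabla w_j|\to|\sqrt{Q}\nabla u|$ pointwise a.e. Set
\[ u_j=\min(\max(w_j,0),M+1). \]
Since $t\mapsto\min(\max(t,0),M+1)$ is $1$-Lipschitz and vanishes at $t=0$, each $u_j$ is Lipschitz with $\supp u_j\subset\supp w_j$, so $u_j\in Lip_0(\Omega)$; the bounds $0\leq u_j\leq M+1$ give (1). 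Because $0\leq u\leq M$ $v$-a.e., the truncation does not alter the $v$-a.e.\ limit, so $u_j\to u$ $v$-a.e., and the uniform bound $|u_j|\leq M+1$ together with $v(\Omega)<\infty$ yields $L^2(v;\Omega)$ convergence by dominated convergence, proving (2).

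For (3) and (4), the Lipschitz chain rule gives $\nabla u_j=\mathbbm{1}_{\{0<w_j<M+1\}}\nabla w_j$ a.e., so $\|\nabla u_j\|_{L^2_Q(\Omega)}\leq\|\nabla w_j\|_{L^2_Q(\Omega)}\leq\|\nabla u\|_{L^2_Q(\Omega)}+1$ for $j$ large, giving (4). For the $L^2_Q$ convergence I split
\begin{align*}
\|\nabla u_j-\nabla u\|_{L^2_Q(\Omega)}
&\leq \|\mathbbm{1}_{\{0<w_j<M+1\}}(\nabla w_j-\nabla u)\|_{L^2_Q(\Omega)} \\
&\quad + \|(1-\mathbbm{1}_{\{0<w_j<M+1\}})\nabla u\|_{L^2_Q(\Omega)}.
\end{align*}
The first summand is at most $\|\nabla w_j-\nabla u\|_{L^2_Q(\Omega)}\to 0$. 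On $\{u>0\}$ the $v$-a.e.\ limit $w_j\to u\in(0,M]$ forces $\mathbbm{1}_{\{0<w_j<M+1\}}\to 1$ $v$-a.e., and Lemma~\ref{meas0} combined with $|Q|_{op}\leq kv$ shows that $v$-null sets are null for the measure $|\sqrt{Q}\nabla u|^2\,dx$; dominated convergence then kills the contribution over $\{u>0\}$. The pointwise a.e.\ convergence $|\sqrt{Q}\nabla u_j|\to|\sqrt{Q}\nabla u|$ drops out of the chain-rule identity and the pointwise limit of $|\sqrt{Q}\nabla w_j|$ once the analogous fact on $\{u=0\}$ is available.

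The main obstacle is the remaining contribution on $\{u=0\}$, where the indicator $\mathbbm{1}_{\{0<w_j<M+1\}}$ can oscillate with the sign of $w_j$. I handle it by establishing the degenerate Stampacchia identity $\sqrt{Q}\nabla u=0$ a.e.\ on $\{u=0\}$. Apply Lemma~\ref{lemma:main1-lemma} with $r=1/k$ to produce $((u-1/k)_+,\mathbbm{1}_{\{u>1/k\}}\nabla u)\in QH^1_0(v;\Omega)$ for each $k$, observe that $(u-1/k)_+\to u$ in $L^2(v;\Omega)$ and $\mathbbm{1}_{\{u>1/k\}}\nabla u\to\mathbbm{1}_{\{u>0\}}\nabla u$ in $L^2_Q(\Omega)$ by dominated convergence, and conclude from completeness of $QH^1_0(v;\Omega)$ that the pair $(u,\mathbbm{1}_{\{u>0\}}\nabla u)$ also lies in $QH^1_0(v;\Omega)$. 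Uniqueness of the weak-gradient pair associated to a given $u$ (from the construction in~\cite{CRR1}) then forces $\nabla u=\mathbbm{1}_{\{u>0\}}\nabla u$ a.e., which yields the Stampacchia identity and closes out (3).
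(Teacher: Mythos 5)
Your overall strategy---truncate a Lipschitz approximating sequence and check that truncation preserves the convergences---is the same as the paper's, but the implementation diverges in a way that creates a genuine gap. The paper first replaces $z_j$ by $|z_j|$ (which preserves $|\sqrt{Q}\nabla z_j|$ pointwise) and then applies a \emph{smooth} cutoff $\phi$ at the level $M=\|u\|_{L^\infty(v;\Omega)}+1$, which lies strictly above the essential supremum of $u$; since $w_j(s)\to u(s)\le M-1$ $v$-a.e., the cutoff is eventually inactive at $v$-a.e.\ point, so the delicate behavior at the truncation level never has to be analyzed. Your hard two-sided truncation $\min(\max(\cdot,0),M+1)$ forces you to control the indicator $\mathbbm{1}_{\{0<w_j<M+1\}}$ on the set $\{u=0\}$, where the sign of $w_j$ can oscillate, and you correctly isolate this as the crux.

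The problem is your proposed resolution. Your argument that $(u,\mathbbm{1}_{\{u>0\}}\nabla u)\in QH^1_0(v;\Omega)$ (via Lemma~\ref{lemma:main1-lemma} with $r=1/k$ and completeness) is fine, but the final step invokes ``uniqueness of the weak-gradient pair associated to a given $u$,'' and no such uniqueness is available. The space $QH^1_0(v;\Omega)$ is defined as an abstract completion, and the paper explicitly stresses that the second component $\mathbf g$ of a pair is \emph{not} the weak gradient of $u$ in the classical sense; in the degenerate setting the map from Cauchy sequences to pairs need not be injective in the function component, i.e.\ one may have $\psi_j\in Lip_0(\Omega)$ with $\psi_j\to 0$ in $L^2(v;\Omega)$ while $\nabla\psi_j\to\mathbf g\neq 0$ in $L^2_Q(\Omega)$. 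Nothing in the paper or in the cited construction rules this out, so knowing that both $(u,\nabla u)$ and $(u,\mathbbm{1}_{\{u>0\}}\nabla u)$ belong to $QH^1_0(v;\Omega)$ does not let you conclude that $\sqrt{Q}\nabla u=0$ a.e.\ on $\{u=0\}$. Without that Stampacchia-type identity, the second summand in your splitting of $\|\nabla u_j-\nabla u\|_{L^2_Q(\Omega)}$ is not shown to vanish, and the pointwise convergence of $|\sqrt{Q}\nabla u_j|$ on $\{u=0\}$ is likewise unproved. To repair the argument along the paper's lines, drop the lower truncation: set $w_j=|z_j|$ (so that $|\sqrt{Q}\nabla w_j|=|\sqrt{Q}\nabla z_j|$ a.e.) and truncate only from above with a smooth $\phi$ at a level exceeding $\|u\|_{L^\infty(v;\Omega)}$, so that the problematic level set is never one where $u$ concentrates.

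Two minor points: your property (4) is only obtained ``for $j$ large,'' whereas the lemma asserts it for every $j$; this is harmless (relabel the sequence, or normalize the original $w_j$ so that $\|\nabla w_j\|_{L^2_Q(\Omega)}\le\|\nabla u\|_{L^2_Q(\Omega)}+1$ from the outset, as the paper does). Also, the treatment of the $v$-null exceptional sets via Lemma~\ref{meas0} is correct and matches the paper's use of that lemma.
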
 

\begin{proof}
  By the definition of $QH^1_0(v;\Omega)$ and by passing twice to a
  subsequence, there exists a sequence $\{z_j\}_j\subset
  Lip_0(\Omega)$ such that:
\begin{enumerate}
\item[($1^\prime$)] $z_j\rightarrow u$ both $v$-a.e. and also in $L^2(v;\Omega)$
\item[($2^\prime$)] $\nabla z_j\rightarrow \nabla u$ in $L^2_Q(\Omega)$ and $|\sqrt{Q}\nabla z_j| \rightarrow |\sqrt{Q}\nabla u|$ a.e.;
\item[($3^\prime$)] $\|\nabla z_j\|_{L^2_Q(\Omega)} \leq \|\nabla u\|_{L^2_Q(\Omega)}+1.$
\end{enumerate}

Now let $w_j = |z_j|$. Since  $u$ is non-negative $v$-a.e. in
$\Omega$, by  the triangle inequality we have that
\[|w_j-u| = ||z_j|-| u|| \leq |z_j-u|\]
$v$-a.e.  Therefore, we have that $w_j$ converges to $u$ both in $L^2(v;\Omega)$ and pointwise $v$-a.e. 

By the Rademacher-Stepanov theorem~\cite{MR3409135}, 
$\nabla w_j (x) = sgn(z_j(x))\nabla z_j(x)$ a.e.  Hence, $|\sqrt{Q}
\nabla w_j(x)| = |\sqrt{Q}\nabla z_j(x)|$ a.e. and so  $\|\nabla
w_j\|_{L^2_Q(\Omega)}\rightarrow \|\nabla u\|_{L^2_Q(\Omega)}$ as
$j\rightarrow \infty$. Thus $w_j\geq 0$ a.e. and properties ($1^\prime$)--($3^\prime$)
above hold with $z_j$ replaced by $w_j$.

\smallskip

We now define the sequence of $Lip_0(\Omega)$ functions $\{u_j\}_j$.
Set $M=\|u\|_{L^\infty(v;\Omega)}+1$ and let $\phi :
[0,\infty)\rightarrow [0,\infty)$ be such that $\phi\in C^\infty$,
$\phi$ is increasing, $\phi(x)=x$ if $0\leq x\leq M-\frac{1}{2}$,  $\phi(x)=M$
if $x\geq M+1$, and $\phi'(x)\leq 1$.   Define the $u_j$ by
$u_j(x)=\phi(w_j(x))$.  Then $\phi_j \in Lip_0(\Omega)$; moreover, $\nabla u_j(x) =
\phi'(w_j(x))\nabla w_j(x)$ a.e. and so 
\begin{equation} \label{eqn:grad-uj-bound}
 |\sqrt{Q(x)}\nabla u_j(x)| \leq |\sqrt{Q(x)}\nabla w_j(x)|. 
\end{equation}

We claim that $\{u_j\}_j$ satisfies properties (1)--(4) above.
By the definition of $\phi$, $0\leq u_j \leq M$, so property (1)
holds.   Property (4) follow immediately from
\eqref{eqn:grad-uj-bound} and property ($3^\prime$) for the $w_j$.

It remains to prove properties (2) and (3).   By the choice of $M$,
$u(s) \leq M-1$ for $v$-a.e. $s\in \Omega$.  We also have that
$w_j(s)\rightarrow u(s)$ $v$-a.e.  Let $F$ be the set of all  $s\in \Omega$ such that both of these
hold. Then $v(\Omega\setminus F)=0$.  Given $s\in F$ there exists $N>0$ such that if $j \geq N$, 
$w_j(s)<M-\frac{1}{2}$, and so $u_j(s)=w_j(s)$.  Thus, $u_j\rightarrow
u$ pointwise $v$-a.e.  Since $u$ is bounded and $v(\Omega)<\infty$, by
the dominated convergence theorem we also have that $u_j\rightarrow u$
in $L^2(v;\Omega)$.  This proves (2).

To prove (3) define the set $F$ as above.  For each $s\in F$, there exists
$N>0$ such that for each $j\geq N$ there exists a
ball $B_{j,s}$ where for $x\in B_{j,s}$, $w_j(x) < M-\frac{1}{2}$;
hence, $\nabla u_j(s) = \nabla w_j(s)$ for $j\geq N$. 
Now let $G$ be the set of $s\in
\Omega$ such that $|\sqrt{Q(s)}\nabla
w_j(s)|\rightarrow |\sqrt{Q(s)}\nabla u(s)|$; by ($2'$),
$|\Omega\setminus G|=0$.  Since $v\,dx$ is an absolutely continuous
measure, $v(\Omega\setminus G)=0$.   Let $H=F\cap G$.  Then on $H$ we
have that $|\sqrt{Q}\nabla
u_j|\rightarrow |\sqrt{Q}\nabla u|$ pointwise.  But $v(\Omega\setminus
H)=0$ so by Lemma~\ref{meas0} we have
that
\[ \|\nabla u_j \|_{L^2_Q(\Omega\setminus H)} = 0 = \|\nabla u
  \|_{L^2_Q(\Omega\setminus H)}. \]
This implies that $|\sqrt{Q}\nabla u_j| = 0 = |\sqrt{Q}\nabla u|$ almost
everywhere on $\Omega\setminus H$.  Therefore, we have that $|\sqrt{Q}\nabla
u_j|\rightarrow |\sqrt{Q}\nabla u|$ pointwise a.e.

Finally, to prove that $\nabla u_j \rightarrow \nabla u$ in
$L^2_Q(\Omega)$ we use the generalized dominated convergence theorem
as in the proof of Lemma~\ref{lemma:main1-lemma}.  Let $f_j=
|\sqrt{Q}(\nabla u_j-\nabla u)|^2$; then $f_j\rightarrow 0$ a.e.
Further, by \eqref{eqn:grad-uj-bound}
\[ f_j \leq 2|\sqrt{Q}\nabla u_j|^2 +2|\sqrt{Q}\nabla u|^2
  \leq  2|\sqrt{Q}\nabla w_j|^2 +2|\sqrt{Q}\nabla u|^2 = g_j.  \]
Again by ($2^\prime$), $g_j \rightarrow 4|\sqrt{Q}\nabla u|^2=g$ a.e.,
and since $\nabla w_j \rightarrow \nabla u$ in $L^2_Q(\Omega)$,
$g_j\rightarrow g$ in $L^1(\Omega)$.   Therefore, $f_j \rightarrow 0$
in $L^1(\Omega)$, which completes the proof of (3).
\end{proof}

The next two lemmas give the product rule and chain rule associated to
pairs in $QH^1_0(\Omega)$.   The proofs are adapted from those of
similar results in~\cite{MRW}.


\begin{lem}\label{prod} Let $(u,\nabla u) \in QH_0^1(v;\Omega)$ and
  let $\psi\in Lip_0(\Omega)$.  Then we have that  $(u\psi,\psi\nabla u+u\nabla \psi) \in QH^1_0(v;\Omega)$.
\end{lem}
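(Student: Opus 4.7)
The plan is to construct an explicit approximating sequence for the pair $(u\psi,\psi\nabla u+u\nabla\psi)$ and verify the two convergence properties required by the definition of $QH^1_0(v;\Omega)$. By the definition of this space, fix a sequence $\{w_j\}_j\subset Lip_0(\Omega)$ such that $w_j\to u$ in $L^2(v;\Omega)$ and $\nabla w_j\to\nabla u$ in $L^2_Q(\Omega)$. Since $\psi\in Lip_0(\Omega)$, each product $w_j\psi$ is again in $Lip_0(\Omega)$, and the classical product rule for Lipschitz functions gives $\nabla(w_j\psi)=\psi\nabla w_j+w_j\nabla\psi$ almost everywhere. Our candidate approximation is therefore the sequence $(w_j\psi,\nabla(w_j\psi))$, and it suffices to show that it converges to $(u\psi,\psi\nabla u+u\nabla\psi)$ in $L^2(v;\Omega)\times L^2_Q(\Omega)$.

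Convergence in $L^2(v;\Omega)$ is immediate: because $\psi$ is bounded,
\[
\|w_j\psi-u\psi\|_{L^2(v;\Omega)}\leq \|\psi\|_{L^\infty(\Omega)}\|w_j-u\|_{L^2(v;\Omega)}\longrightarrow 0.
\]
For the $L^2_Q(\Omega)$ convergence of the gradients, I would split
\[
\bigl\|\psi\nabla w_j+w_j\nabla\psi-\psi\nabla u-u\nabla\psi\bigr\|_{L^2_Q(\Omega)}
\leq \bigl\|\psi(\nabla w_j-\nabla u)\bigr\|_{L^2_Q(\Omega)}
+\bigl\|(w_j-u)\nabla\psi\bigr\|_{L^2_Q(\Omega)}.
\]
The first term is controlled by $\|\psi\|_{L^\infty(\Omega)}\|\nabla w_j-\nabla u\|_{L^2_Q(\Omega)}$, which tends to zero by assumption.

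The main obstacle, and the step that actually uses the structural hypothesis $|Q|_{op}\leq kv$, is the second term, because it mixes scalar convergence of $w_j-u$ (which we only control in $L^2(v;\Omega)$) with the matrix-weighted norm on $\nabla\psi$. For a.e. $x\in\Omega$ we have the pointwise bound
\[
\bigl|\sqrt{Q(x)}\nabla\psi(x)\bigr|^2
\leq |Q(x)|_{op}\,|\nabla\psi(x)|^2
\leq k\,\|\nabla\psi\|_{L^\infty(\Omega)}^2\,v(x),
\]
and hence
\[
\bigl\|(w_j-u)\nabla\psi\bigr\|_{L^2_Q(\Omega)}^2
=\int_\Omega |\sqrt{Q}\nabla\psi|^2\,(w_j-u)^2\,dx
\leq k\,\|\nabla\psi\|_{L^\infty(\Omega)}^2\,\|w_j-u\|_{L^2(v;\Omega)}^2,
\]
which again tends to zero. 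Combining the two estimates shows that the sequence $(w_j\psi,\nabla(w_j\psi))$ is Cauchy in $L^2(v;\Omega)\times L^2_Q(\Omega)$ with limit $(u\psi,\psi\nabla u+u\nabla\psi)$, establishing that this pair lies in $QH^1_0(v;\Omega)$.
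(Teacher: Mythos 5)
Your proof is correct and follows essentially the same route as the paper's: the same approximating sequence $(w_j\psi,\nabla(w_j\psi))$, the same splitting of the gradient difference into two terms, and the same use of $|Q|_{op}\leq kv$ to absorb the $(w_j-u)\nabla\psi$ term into the $L^2(v;\Omega)$ convergence. Your pointwise bound is in fact slightly more carefully stated (yielding the constant $\sqrt{k}\,\|\nabla\psi\|_{L^\infty}$), but the argument is the same.
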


\begin{proof}
  By the definition of  $QH_0^1(v;\Omega)$ there exists a sequence
  $\{w_j\}\subset Lip_0(\Omega)$ such that $w_j\rightarrow u$ in
  $L^2(v;\Omega)$ and $\nabla w_j \rightarrow \nabla u$ in
  $L^2_Q(\Omega)$.  But then we immediately have that
  \[ \|w_j\psi-u\psi\|_{L^2(v;\Omega)} \leq \|\psi \|_\infty
    \|w_j-u\|_{L^2(v;\Omega)}, \]
    and so $w_j\psi\rightarrow u\psi$ in $L^2(v;\Omega)$.

    Similarly, since $|Q|_{op}\leq kv$ a.e., we have that
    \begin{multline*}
      \|\nabla(w_j\psi) - (u\nabla\psi+\psi\nabla u)\|_{L^2_Q(\Omega)}
      \leq \|\psi \nabla w_j-\psi\nabla u\|_{L^2_Q(\Omega)}
      + \|w_j\nabla\psi - u\nabla \psi\|_{L^2_Q(\Omega)} \\
      \leq \|\psi\|_\infty \|\nabla w_j - \nabla u \|_{L^2_Q(\Omega)}
      + k\|\nabla \psi\|_\infty \|w_j-u\|_{L^2(v;\Omega)}. 
      \end{multline*}
   Thus, $\nabla(w_j\psi)\rightarrow u\nabla\psi+\psi\nabla u$ in
      $L^2_Q(\Omega)$ and so
      $(u\psi,u\nabla \psi + \psi\nabla u) \in QH^1_0(v;\Omega)$.
\end{proof}


\begin{lem}\label{comp} Let $(u,\nabla u)\in QH^1_0(v;\Omega)$ with
  $u\geq 0$ $v$-a.e. and $u\in L^\infty(v;\Omega)$.  Then, given any
  non-negative function $\varphi\in C^1(\mathbb{R})$ such that
  $\varphi(0)=0$, the pair
  $(\varphi(u),\varphi'(u)\nabla u)\in QH^1_0(v;\Omega)$.
\end{lem}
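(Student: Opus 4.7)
The plan is to approximate $u$ by the Lipschitz sequence $\{u_j\}_j\subset Lip_0(\Omega)$ furnished by Lemma~\ref{specialapprox}, compose each $u_j$ with $\varphi$, and show that $\{\varphi(u_j)\}_j$ is Cauchy in $QH^1_0(v;\Omega)$ with limit pair $(\varphi(u),\varphi'(u)\nabla u)$. Recall that the approximants satisfy $0\leq u_j\leq M$ on $\Omega$ where $M=\|u\|_{L^\infty(v;\Omega)}+1$, $u_j\to u$ both $v$-a.e.\ and in $L^2(v;\Omega)$, and $\nabla u_j\to\nabla u$ in $L^2_Q(\Omega)$.

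First I check that each $\varphi(u_j)$ lies in $Lip_0(\Omega)$: since $u_j$ has compact support $K_j\subset\Omega$ and $\varphi(0)=0$, the composition vanishes outside $K_j$, and $\varphi$ is Lipschitz on the compact range $[0,M]$ of $u_j$; by the chain rule $\nabla\varphi(u_j)=\varphi'(u_j)\nabla u_j$ a.e. Next, $\varphi(u_j)\to\varphi(u)$ in $L^2(v;\Omega)$, because continuity of $\varphi$ and pointwise $v$-a.e.\ convergence give $\varphi(u_j)\to \varphi(u)$ $v$-a.e., while the uniform bound $|\varphi(u_j)-\varphi(u)|\leq 2\sup_{[0,M]}|\varphi|$ together with $v(\Omega)<\infty$ allows the dominated convergence theorem to apply. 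The remaining and main task is to show $\varphi'(u_j)\nabla u_j\to\varphi'(u)\nabla u$ in $L^2_Q(\Omega)$. To do this I split
\[ \varphi'(u_j)\nabla u_j-\varphi'(u)\nabla u=\varphi'(u_j)(\nabla u_j-\nabla u)+(\varphi'(u_j)-\varphi'(u))\nabla u. \]
The first term has $L^2_Q(\Omega)$ norm at most $\sup_{[0,M]}|\varphi'|\cdot\|\nabla u_j-\nabla u\|_{L^2_Q(\Omega)}\to 0$. For the second, I apply the dominated convergence theorem with the Lebesgue-integrable majorant $4\sup_{[0,M]}|\varphi'|^2\,|\sqrt{Q}\nabla u|^2\in L^1(\Omega)$.

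The main technical obstacle is establishing Lebesgue-a.e.\ convergence of the integrand $|(\varphi'(u_j)-\varphi'(u))\sqrt{Q}\nabla u|^2$, because Lemma~\ref{specialapprox} only provides $v$-a.e.\ convergence $u_j\to u$, while the $L^2_Q(\Omega)$ norm is defined with respect to Lebesgue measure. This is reconciled by Lemma~\ref{meas0}: on the $v$-null set $E$ where $u_j\not\to u$, $\sqrt{Q}\nabla u=0$ a.e.\ in the Lebesgue sense, so the integrand vanishes a.e.\ on $E$; on $\Omega\setminus E$ it tends to $0$ by the continuity of $\varphi'$. With these ingredients assembled, $(\varphi(u_j),\varphi'(u_j)\nabla u_j)\to (\varphi(u),\varphi'(u)\nabla u)$ in $L^2(v;\Omega)\times L^2_Q(\Omega)$, so the limit pair lies in $QH^1_0(v;\Omega)$, as required.
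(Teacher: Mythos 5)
Your proof is correct, and it follows the same overall strategy as the paper: approximate via the special sequence of Lemma~\ref{specialapprox}, verify $\varphi(u_j)\in Lip_0(\Omega)$, and pass to the limit in both components. The difference lies in how the gradient convergence is handled. The paper applies the generalized dominated convergence theorem directly to $|\sqrt{Q}\nabla\varphi(u_j)-\varphi'(u)\sqrt{Q}\nabla u|^2$ with the variable majorant $2A_0^2(|\sqrt{Q}\nabla u_j|^2+|\sqrt{Q}\nabla u|^2)$, $A_0=\|\varphi'\|_{L^\infty([0,M])}$, which requires the pointwise a.e.\ convergence statement built into property (3) of Lemma~\ref{specialapprox}. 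You instead split $\varphi'(u_j)\nabla u_j-\varphi'(u)\nabla u$ into $\varphi'(u_j)(\nabla u_j-\nabla u)$, killed by the uniform bound on $\varphi'$ over $[0,M]$ together with $L^2_Q$-convergence, plus $(\varphi'(u_j)-\varphi'(u))\nabla u$, handled by ordinary dominated convergence with the fixed majorant $4A_0^2|\sqrt{Q}\nabla u|^2$. Your explicit use of Lemma~\ref{meas0} to reconcile $v$-a.e.\ convergence of $u_j$ with Lebesgue-a.e.\ convergence of the integrand is exactly the right move and is the same device the paper uses elsewhere (e.g.\ in Lemma~\ref{lemma:main1-lemma}); it also quietly covers the set where $u\notin[0,M]$, since $\sqrt{Q}\nabla u$ vanishes a.e.\ there. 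What your route buys is that you need only the ordinary dominated convergence theorem and only the $v$-a.e.\ convergence $u_j\to u$, not the pointwise behaviour of $\sqrt{Q}\nabla u_j$ itself; what the paper's route buys is a single uniform template (generalized DCT with a converging majorant) reused verbatim from Lemmas~\ref{lemma:main1-lemma} and~\ref{specialapprox}. Your $L^2(v;\Omega)$ argument for $\varphi(u_j)\to\varphi(u)$ via the constant majorant and $v(\Omega)<\infty$ is likewise a mild simplification of the paper's generalized-DCT step. No gaps.
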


\begin{proof}
  Let $\{u_j\}_j\subset Lip_0(\Omega)$ be the sequence associated with
  $(u,\nabla u)$ given by Lemma \ref{specialapprox}. Since $u_j$ is
  Lipschitz with compact support in $\Omega$ and $\varphi(0)=0$,
  $\psi_j=\varphi(u_j)\in Lip_0(\Omega)$.  Since $u_j\rightarrow u$
  $v$-a.e., the continuity of $\varphi$ implies that
  $\psi_j\rightarrow \varphi(u) =\psi$ $v$-a.e.    By the
  fundamental theorem of calculus,
  \[ |\varphi(t)|
    = \left|\int_0^t \varphi'(s)ds\right|
    \leq \|\varphi'\|_{L^\infty([0,M])}|t| = A_0|t|\]
whenever $0\leq t\leq M =\|u\|_{L^\infty(v;\Omega)}+1$.   

Since by assumption and property (1) of  Lemma \ref{specialapprox},
$0\leq u(x),\, u_j(x) \leq M$  for $v$-a.e. $x\in \Omega$, we have
that $v$-a.e.,
\[ |\psi_j - \psi|^2
  \leq  2(|\psi_j|^2+|\psi|^2) 
\leq  2A_0^2(|u_j|^2 + |u|^2). \]
Since $|u_j|^2 + |u|^2 \rightarrow 2|u|^2$ $v$-a.e. and in
$L^1(v;\Omega)$, by the generalized Lebesgue dominated convergence
theorem we get that $\psi_j\rightarrow \psi$ in $L^2(v;\Omega)$.

To show the convergence of the gradients, first note that
$\sqrt{Q}\nabla \psi_j = \varphi'(u_j)\sqrt{Q}\nabla u_j$ a.e. in
$\Omega$ and so by the continuity of $\varphi'$ and property (3) in
Lemma \ref{specialapprox} we get that
$\sqrt{Q}\nabla \psi_j \rightarrow \varphi'(u)\sqrt{Q}\nabla u$ a.e.
Moreover,
\begin{multline*}
  |\sqrt{Q}\nabla (\psi_j) - \varphi'(u)\sqrt{Q}\nabla u|^2
  \leq 2|\varphi'(u_j) \sqrt{Q} \nabla u_j|^2 + 2|\varphi'(u) \sqrt{Q} \nabla u|^2 \\
  \leq 2A_0^2(|\sqrt{Q}\nabla u_j|^2 + |\sqrt{Q}\nabla u|^2).
\end{multline*}
The right-hand term converges to $4A_0^2|\sqrt{Q}\nabla u|^2$ both
pointwise a.e. and in $L^1(\Omega)$.  Therefore, we can again apply
the  generalized dominated
convergence theorem to get that
$\nabla \psi_j\rightarrow \varphi'(u)\nabla u$ in $L^2_Q(\Omega)$.
We conclude that
$(\varphi(u), \varphi'(u)\nabla u)\in QH^1_0(v;\Omega)$.
\end{proof}


\subsection*{Exponential results}
In this section we give two results which are needed to prove
Theorem~\ref{main1}.   The first gives a solution to an auxiliary Dirichlet problem
and is  an application of the previous two lemmas.

\begin{lem}\label{AuxProb}
 Fix $\alpha>0$.  If $(u,\nabla u)\in QH^1_0(\Omega)$ is a
  non-negative bounded weak subsolution of the Dirichlet problem
\begin{eqnarray}\label{dpA}
\left\{\begin{array}{rcll}
-\Div\left(Q\nabla u\right)&=&fv&\textrm{for }x\in\Omega,\\
u&=&0&\textrm{for }x\in\partial\Omega,
\end{array}
\right.
\end{eqnarray}
then $(w,\nabla w) = (e^{\alpha u}-1,\alpha e^{\alpha u}\nabla u) \in QH_0^1(v;\Omega)$ is a non-negative
weak subsolution of the Dirichlet problem
\begin{eqnarray}\label{dpB}
\left\{ \begin{array}{rcll}
-\Div\left(Q\nabla w\right)&=&\alpha f(w+1)v&\textrm{for }x\in\Omega,\\
w&=&0&\textrm{for }x\in\partial\Omega.
\end{array}
\right.
\end{eqnarray}
\end{lem}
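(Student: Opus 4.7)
The plan has two main components: first, I would establish that $(w,\nabla w) \in QH^1_0(v;\Omega)$ using the chain rule (Lemma \ref{comp}); second, I would derive the new subsolution inequality by testing the original subsolution inequality for $u$ against the function $\phi = (w+1)\psi = e^{\alpha u}\psi$. This choice produces an extra non-negative term on the left that can be discarded, which is precisely what converts the inequality for $u$ into the desired inequality for $w$.

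For the first component, I would apply Lemma \ref{comp} with $\varphi(t) = e^{\alpha t}-1$. Since that lemma requires $\varphi \geq 0$ on all of $\mathbb{R}$, I would smoothly replace $\varphi$ outside $[0,M]$, where $M = \|u\|_{L^\infty(v;\Omega)}$, by a non-negative $C^1$ extension that still satisfies $\varphi(0)=0$; this modification is inconsequential since $0\leq u \leq M$ $v$-a.e., so $\varphi(u)$ and $\varphi'(u)$ are unaffected. This gives $(w,\nabla w) = (e^{\alpha u}-1,\alpha e^{\alpha u}\nabla u) \in QH^1_0(v;\Omega)$, and $w$ is clearly non-negative and bounded.

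For the second component, fix a non-negative $\psi \in Lip_0(\Omega)$ and form $\phi = (w+1)\psi$. The product rule (Lemma \ref{prod}) applied to $(w,\nabla w)$ and $\psi$ places $(w\psi, \psi\nabla w + w\nabla\psi)$ in $QH^1_0(v;\Omega)$, and adding the pair $(\psi,\nabla\psi)$ (which lies in $QH^1_0(v;\Omega)$ since $\psi \in Lip_0(\Omega)$) shows $(\phi,\psi\nabla w + (w+1)\nabla\psi) \in QH^1_0(v;\Omega)$. Since $\phi \geq 0$, the remark following Definition \ref{weaksol} allows me to use $\phi$ as a test function in the weak subsolution inequality for \eqref{dpA}, giving
$$\int_\Omega \bigl[\psi\nabla w + (w+1)\nabla\psi\bigr]\cdot Q\nabla u \, dx \leq \int_\Omega f(w+1)\psi\, v\, dx.$$
Now the identity $\nabla w = \alpha(w+1)\nabla u$ yields $(w+1)\nabla\psi \cdot Q\nabla u = \alpha^{-1}\nabla\psi \cdot Q\nabla w$, while $\psi\nabla w \cdot Q\nabla u = \alpha(w+1)\psi|\sqrt{Q}\nabla u|^2 \geq 0$. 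Dropping this non-negative term and multiplying through by $\alpha$ gives exactly the weak subsolution inequality for \eqref{dpB}.

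The main obstacle, and essentially the only nontrivial step, is justifying that $\phi=(w+1)\psi$ is an admissible test function even though it is not in $Lip_0(\Omega)$; this requires combining Lemmas \ref{comp} and \ref{prod} to place $(\phi,\nabla\phi)$ in $QH^1_0(v;\Omega)$ and then invoking the extension of admissible test functions to non-negative elements of $QH^1_0(v;\Omega)$ noted after Definition \ref{weaksol}. Once this is in hand, the remainder of the argument is a direct computation exploiting the key fact that the chain rule makes $\nabla w$ proportional to $(w+1)\nabla u$, which is the algebraic mechanism converting the source term $fv$ for $u$ into the source term $\alpha f(w+1)v$ for $w$.
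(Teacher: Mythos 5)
Your proposal is correct and follows essentially the same route as the paper: establish $(w,\nabla w)\in QH^1_0(v;\Omega)$ via Lemma~\ref{comp}, place $(\psi(w+1),(w+1)\nabla\psi+\psi\nabla w)$ in $QH^1_0(v;\Omega)$ via Lemma~\ref{prod}, test the subsolution inequality for $u$ with it, use $\nabla w=\alpha(w+1)\nabla u$, and drop the non-negative quadratic term. Your extra care in modifying $\varphi(t)=e^{\alpha t}-1$ off $[0,M]$ so that it meets the non-negativity hypothesis of Lemma~\ref{comp} is a valid refinement of a point the paper passes over silently.
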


\begin{proof} Fix a non-negative $\psi\in Lip_0(\Omega)$.  By our
  assumptions on $(u,\nabla u)$ and by Lemmas~\ref{prod}
  and~\ref{comp} we have that
  that both
  $(w,\nabla w)= (e^{\alpha u}-1,\alpha e^{\alpha u}\nabla
  u)$ and $(\psi (w+1), (w+1)\nabla\psi+\psi\nabla w)$ are in
  $QH^1_0(\Omega)$. Since  $\nabla w = \alpha (w+1)\nabla u$ and
  $(u,\nabla u)$ is a non-negative weak subsolution of
  \eqref{dpA}, we have that
\begin{align*}
  \int_\Omega f(w+1)\psi~vdx
  &\geq  \int_\Omega \nabla (\psi(w+1))Q\nabla u~dx\\
& = \int_\Omega (w+1)\nabla \psi Q\nabla u~dx + \int_\Omega \psi\nabla(w+1)Q \nabla w~dx \\
  &= \frac{1}{\alpha}\int_\Omega \nabla \psi Q\nabla w~dx
    + \int_\Omega \psi \nabla w Q\nabla w~dx\\
&\geq \frac{1}{\alpha}\int_\Omega \nabla \psi Q\nabla w~dx.
\end{align*}
Since $\psi\in Lip_0(\Omega)$ is arbitrary, we conclude that $w$ is a
non-negative weak subsolution of~\eqref{dpB}.
\end{proof}


Our second result gives the exponential
integrability of bounded solutions to \eqref{dp}.   A version of this
result is proved in~\cite[Lemma~B]{X} for
uniformly elliptic operators; a qualitative version appeared
previously in~\cite[Example~4]{MR1721824}.  Here we adapt the proof
from \cite{X} to our more
general setting.


\begin{lem}\label{expint}
  Suppose  Hypothesis~\ref{sobolev} holds.  Let  $f\in L^{\sigma'}(v;\Omega)$ satisfy
  $\|f\|_{\sigma';v}\leq 1$, and let 
  $(u,\nabla u) \in QH_0^1(\Omega)$ be a bounded, non-negative weak
  subsolution of ~\eqref{dp}.  Then, for every
  $\gamma \in (0,\frac{4}{C_0^2})$, with $C_0$ as in
  \eqref{sob}, there 
  $M= M(\gamma,C_0, v(\Omega))$ such that 
\begin{equation}\label{expest}
  \int_\Omega e^{\gamma u(x)}v(x)\,dx
  \leq M.
\end{equation}
\end{lem}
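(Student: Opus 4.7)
The plan is to test the weak subsolution inequality for $u$ against the non-negative bounded function $\varphi = e^{\alpha u}-1$ with $\alpha = \gamma$. Since $u\in L^\infty(v;\Omega)$ is non-negative, Lemma~\ref{comp} applied to $\varphi_1(t) = e^{\alpha t}-1$ ensures $(\varphi, \alpha e^{\alpha u}\nabla u)\in QH^1_0(v;\Omega)$, so by the remark following Definition~\ref{weaksol} it is admissible as a test function, giving
\begin{equation*}
\int_\Omega \alpha e^{\alpha u}|\sqrt{Q}\nabla u|^2\,dx \leq \int_\Omega f(e^{\alpha u}-1)v\,dx.
\end{equation*}

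The key algebraic trick is the ``square-root'' substitution $h = e^{\alpha u/2}-1$, which also lies in $QH^1_0(v;\Omega)$ by Lemma~\ref{comp} with weak gradient $(\alpha/2)e^{\alpha u/2}\nabla u$. The chain rule produces the pointwise identity $\alpha e^{\alpha u}|\sqrt{Q}\nabla u|^2 = (4/\alpha)|\sqrt{Q}\nabla h|^2$, and $e^{\alpha u}-1 = h^2 + 2h$, so the last display becomes
\begin{equation*}
\frac{4}{\alpha}\,\|\sqrt{Q}\nabla h\|_{L^2(\Omega)}^2 \leq \int_\Omega f(h^2+2h)v\,dx.
\end{equation*}

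Next I would apply H\"older's inequality in the $L^{\sigma'}$--$L^\sigma$ duality, use the assumption $\|f\|_{\sigma';v}\leq 1$, and the elementary Lebesgue comparison $\|h\|_{\sigma;v} \leq v(\Omega)^{1/(2\sigma)}\|h\|_{2\sigma;v}$ to bound the right-hand side by $\|h\|_{2\sigma;v}^2 + 2v(\Omega)^{1/(2\sigma)}\|h\|_{2\sigma;v}$. Feeding in the $L^{2\sigma}(v;\Omega)$ Sobolev estimate of Lemma~\ref{sobolev2}, namely $\|h\|_{2\sigma;v}\leq C_0 Y$ with $Y=\|\sqrt{Q}\nabla h\|_{L^2(\Omega)}$, and dividing by $Y$ (the case $Y=0$ is trivial), I obtain
\begin{equation*}
\Bigl(\tfrac{4}{\alpha}-C_0^2\Bigr) Y \leq 2\,C_0\, v(\Omega)^{1/(2\sigma)}.
\end{equation*}
The hypothesis $\alpha = \gamma < 4/C_0^2$ is exactly what makes the coefficient $4/\alpha - C_0^2$ strictly positive, yielding an explicit bound on $Y$, hence on $\|h\|_{2\sigma;v}$, depending only on $\gamma$, $C_0$, and $v(\Omega)$. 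Since $h+1 = e^{\alpha u/2}$, one has $(h+1)^{2\sigma}=e^{\alpha\sigma u}$, and the triangle inequality then bounds $\int_\Omega e^{\alpha\sigma u}v\,dx$ by such a constant $M$. Finally, since $\sigma>1$ and $u\geq 0$ force $e^{\gamma u}\leq e^{\gamma\sigma u}$, inequality~\eqref{expest} follows.

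The main obstacle is identifying the correct exponential substitution. A naive choice such as testing directly with $w = e^{\alpha u}-1$ in the auxiliary subsolution inequality of Lemma~\ref{AuxProb} only yields the weaker threshold $\alpha < 1/C_0^2$, because Sobolev must then absorb a full factor of $w^2$ against a single $\alpha$ on the right-hand side, leading to an inequality of the form $X^2 \leq \alpha(C_0 X + V)^2$. The square-root substitution $h = e^{\alpha u/2}-1$ evenly distributes the exponential across the two sides of Sobolev and is what produces the factor $4$ and matches the stated threshold $4/C_0^2$.
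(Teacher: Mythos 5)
Your proposal is correct and follows essentially the same route as the paper: test with $e^{\gamma u}-1$, pass to the ``square-root'' function $e^{\gamma u/2}-1$ so that the Sobolev inequality and the H\"older estimates close into the same $L^{2\sigma}(v;\Omega)$ quantity, and absorb using $\gamma<4/C_0^2$. The only cosmetic difference is at the end, where you bound $\int_\Omega e^{\gamma\sigma u}v\,dx$ via $\|h+1\|_{L^{2\sigma}(v;\Omega)}^{2\sigma}$ and then use $e^{\gamma u}\le e^{\gamma\sigma u}$, whereas the paper bounds $\int_\Omega e^{\gamma u}v\,dx=\int_\Omega(\psi^2+2\psi)v\,dx+v(\Omega)$ directly; both are valid.
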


\begin{proof}
  Let $f$ and 
  $(u,\nabla u)$ be as in the hypotheses.
  Define  $\varphi = e^{\gamma u} -1$ and $\psi = e^\frac{\gamma u}{2}-1$ with
  $\gamma>0$ to be chosen below.  Since $u$ is bounded, by
  Lemma~\ref{comp} we have that
  \[ (\varphi,\nabla \varphi) = ( e^{\gamma u} -1, \gamma e^{\gamma
      u}\nabla u), \quad
    (\psi,\nabla \psi) = (e^\frac{\gamma u}{2}-1,
    \frac{\gamma}{2}e^{\frac{\gamma u}{2}} \nabla u) \]
  are in $QH_0^1(\Omega)$.  Further, we immediately have the
  identities $\varphi =\psi^2 + 2\psi$,
  $\nabla \psi = \frac{\gamma}{2}e^{\frac{\gamma u}{2}}\nabla u$, and
  $\nabla \varphi =2 e^{\frac{\gamma u}{2}}\nabla \psi$.
  If we apply the Sobolev inequality~\eqref{sob2} and use $\varphi$ as
  a test function in Definition~\ref{weaksol} we can estimate as follows:
\begin{align*}
  \|\psi\|_{L^{2\sigma}(v;\Omega)}^2
  &\leq
C_0^2\int_\Omega \left|\sqrt{Q(x)}\nabla \psi(x)\right|^2\,dx\\
  & =
    \frac{C_0^2\gamma}{4}\int_\Omega
    \nabla\varphi(x) \cdot Q(x)\nabla u(x)\,dx\\
  &\leq \frac{C_0^2\gamma}{4}\int_\Omega
    f(x)\varphi(x) v(x)\,dx\\
  &= \frac{C_0^2\gamma}{4}
    \left( \int_\Omega f(x)\psi(x)^2 v(x)\, dx
    + 2\int_\Omega f(x)
    \psi(x) v(x)\,dx\right).\\
\intertext{If we now apply H\"older's inequality with exponents
  $\sigma$ and $2\sigma$, and then $2$, we get}
  & = \frac{C_0^2\gamma}{4}
    \left(
    \|f\|_{L^{\sigma'}(v;\Omega)}\|\psi^2\|_{L^{\sigma}(v;\Omega)}
    + 2
    \|f\|_{L^{\sigma'}(v;\Omega)}\|\psi\|_{L^{\sigma}(v;\Omega)}\right) \\
  & \leq \frac{C_0^2\gamma}{4}
    \left( \|\psi\|_{L^{2\sigma}(v;\Omega)}^2
    + 2
    \|\psi\|_{L^{2\sigma}(v;\Omega)}v(\Omega)^{\frac{1}{2\sigma}}\right). 
\end{align*}

If we now fix $\gamma\in (0,\frac{4}{C_0^2})$, then we can
re-arrange terms to get
\begin{equation} \label{eqn:psi-est}
\|\psi\|_{L^{2\sigma}(v;\Omega)}
\leq \frac{C_0^2\gamma}{2(1-\frac{C_0^2\gamma}{4})}
v(\Omega)^{\frac{1}{2\sigma}}.
\end{equation}
Therefore, again by H\"older's inequality and by~\eqref{eqn:psi-est}
applied twice, we have that
\begin{align*}
  \int_\Omega e^{\gamma u(x)}v(x)\,dx
  & = \int_\Omega \big(\psi(x)^2 +2\psi(x)\big)v(x)\,dx
    + v(\Omega) \\
  & \leq \|\psi\|_{L^{2\sigma}(v;\Omega)}^2v(\Omega)^{\frac{1}{\sigma'}}
  + 2 \|\psi\|_{L^{2\sigma}(v;\Omega)}v(\Omega)^{\frac{1}{(2\sigma)'}}
  + v(\Omega) \\
  & \leq C(\gamma,C_0)v(\Omega) \\
  & = M(\gamma,C_0,v(\Omega)).
\end{align*}
\end{proof}
%


\section{Proof of Theorem \ref{main0}}
\label{section:main0}

Fix $f \in L^A(v;\Omega)$ and let
${\bf u}=(u,\nabla u)\in QH^1_0(\Omega)$ be a non-negative weak
subsolution of \eqref{dp}.  We may assume without loss of generality
that $\|f\|_{L^A(v;\Omega)} >0$ (equivalently, that $f$ is non-zero on
a set $E\subset\Omega$ with $v(E)>0$); otherwise, a standard argument
shows that $u=0$ $v$-almost everywhere. (Cf.~\eqref{l2est} below.)  
By Lemma~\ref{SCALE},
$f\in L^{\sigma'}(v;\Omega)$.

For each $r>0$ define
$\varphi_r = (u-r)_+$ and let $S(r)=\{x\in\Omega~:~u(x) >r\}$.  Then by Lemma~\ref{lemma:main1-lemma}, 
 $(\varphi_r,\nabla \varphi_r) = ((u-r)_+,\mathbbm{1}_{S(r)}\nabla u )\in QH^1_0(v;\Omega)$.
We now estimate as follows: by the Sobolev inequality~\eqref{sob2}, the definition of a weak subsolution with $\varphi_r$ as
the test function, and H\"older's
inequality, we have that 
\begin{multline*}
  \|\varphi_r\|_{L^{2\sigma}(v;\Omega)}^2
  \leq C_0^2\int_{S(r)} |\sqrt{Q}\nabla \varphi_r|^2\,dx
= C_0^2\int_{S(r)} \nabla \varphi_r\cdot Q\nabla \varphi_r\,dx\\
= C_0^2\int_{S(r)} \nabla \varphi_r\cdot Q\nabla u\,dx
\leq C_0^2\int_{S(r)} f\varphi_r ~vdx
\leq C_0^2\|f\|_{L^{(2\sigma)'}(v;S(r))}\|\varphi_r\|_{L^{2\sigma}(v;\Omega)}
\end{multline*}
since $\nabla u =\nabla \varphi_r$ on $S(r)$.  If we divide through by
$\|\varphi_r\|_{L^{2\sigma}(v;\Omega)}$, we get
\begin{equation} \label{eqn:varphi-est}
\|\varphi_r\|_{L^{2\sigma}(v;\Omega)} \leq
C\|f\|_{L^{(2\sigma)'}(v;S(r))}.
\end{equation}

In order to estimate the norm of the right-hand side, recall that
since $\sigma>1$, $(2\sigma)' < \sigma'$, we can define the Young
function
$$B(t) = t^\frac{\sigma'}{(2\sigma)'}\log(e+t)^q.$$ 
It is immediate that $B_\sigma(t) =B(t^{(2\sigma)'})\preceq A(t)$ and
so by Lemma~\ref{holders}, a change of variables in the
Luxemburg norm, and Lemmas~\ref{normcompare} and~\ref{indicators} we get
\begin{align*}
  \|f\|_{L^{(2\sigma)'}(v;S(r))}^{(2\sigma)'}
  &=  \int_\Omega |f|^{(2\sigma)'}~\mathbbm{1}_{S(r)}v\,dx \\
  &\leq 2\|f ^{(2\sigma)'}\|_{L^B(v;\Omega)}
    \|\mathbbm{1}_{S(r)}\|_{L^{\bar{B}}(v;\Omega)}\\
  & = 2\|f \|_{L^{B_\sigma}(v;\Omega)}^{(2\sigma)'}
    \|\mathbbm{1}_{S(r)}\|_{L^{\bar{B}}(v;\Omega)}\\
  &\leq C\|f\|_{L^A(v;\Omega)}^{(2\sigma)'}
    \frac{v(S(r))^{\frac{1}{2\sigma-1}}}{\log(e+(v(S(r)))^{-1})^{q\left(\frac{(2\sigma)'}{\sigma'}\right)}},
\end{align*}
where  $C=C(\sigma,q,v(\Omega))$ is independent of  $f,\varphi,$ and ${\bf u}$.

\medskip

We now turn to our iteration argument.  For all $s>r$, $S(s)\subset S(r)$
and, for $x\in S(s)$, $ \varphi_r(x)>s-r>0$.  Hence,
if we combine the above two inequalities, we get
\begin{equation}\label{iterator}
  v(S(s))^\frac{1}{2\sigma}(s-r)
  \leq \|\varphi_r \mathbbm{1}_{S(s)}\|{L^{2\sigma}(v;\Omega)}
  \leq C\|f\|_{L^A(v;\Omega)} ~\displaystyle
  \frac{v(S(r))^\frac{1}{2\sigma}}
  {\log(e+(v(S(r)))^{-1})^\frac{q}{\sigma'}}.
\end{equation}

Define $r_0 = \tau_0\|f\|_{L^A(v;\Omega)}$ with $\tau_0$ to be chosen
below.  Our goal is to find $\tau_0$ sufficiently large so that
$v(S(r_0))=0$, as this immediately implies that
\[ \|u\|_{L^\infty(v;\Omega)} \leq \tau_0\|f\|_{L^A(v;\Omega)}, \]
which is what we want to prove.
To do this, we will use an iteration argument based on De Giorgi
iteration.    For each $k\in\mathbb{N}$ set
\begin{equation}\label{ck}
  C_k = r_0 (1-(k+1)^{-\epsilon})
\end{equation}
where $\epsilon>0$ will be chosen below, and let $C_0=C_1/2$.  The sequence
$\{C_k\}_{k=0}^\infty$  increases to $r_0$ and by an estimate using
the mean-value theorem we have that for each $k\in\mathbb{N}$,
\begin{equation}\label{Ckdist}
  C_{k+1}-C_k \geq \displaystyle\frac{\epsilon ~r_0}{(k+2)^{1+\epsilon}}.
\end{equation}
If we set  $s=C_{k+1},~r=C_k$, $\mu_k = v(S(C_k))$ in
inequality~\eqref{iterator}, we get 
\begin{equation} \label{eqn:mu-est}
  \mu_{k+1}
  \leq \left[\displaystyle\frac{C(k+2)^{1+\epsilon}}
    {\epsilon\tau_0}\right]^{2\sigma}
  \displaystyle\frac{\mu_k}
  {\log(e+\mu_k^{-1})^{\frac{2q\sigma}{\sigma'}}}
\end{equation}
for each $k\in\mathbb{N}$.  By the dominated convergence theorem
$\mu_k$ converges to $v(S(r_0))$, so to complete the proof we need to
prove that $\mu_k\rightarrow 0$.

Let $m_k = \log(\mu_k^{-1})$.  We will show that
$m_k\rightarrow \infty$ as $k\rightarrow \infty$, which is equivalent
to the desired limit.  To do so, we will show that we can choose
$\epsilon$ and $\tau_0$ such that $m_0\geq 2$ and 
\begin{equation} \label{eqn:best-est}
m_{k} \geq m_0 + k
\end{equation}
for all $k\in\mathbb{N}\cup\{0\}$.

Fix $\epsilon=\frac{q}{\sigma'}-1>0$.  
Since $2\sigma(1+\epsilon) = \frac{2\sigma q}{\sigma'}$, if we take
logarithms and re-arrange terms, 
inequality~\eqref{eqn:mu-est} becomes, for $k\in {\mathbb N}$,
\begin{equation}\label{goodest}
  m_{k+1} \geq
  2\sigma\log\left(\displaystyle\frac{\epsilon\tau_0}{C}\right)
  + \frac{2\sigma
    q}{\sigma'}\log\left(\displaystyle\frac{m_k}{k+2}\right)
  +m_k.
\end{equation}
The first step is to fix $m_0$ by an appropriate choice of
$\tau_0>0$.  
If we argue as we did to prove~\eqref{eqn:varphi-est} using $u$ as the test function in the
definition of a weak subsolution,  we get
\begin{equation}\label{l2est}
  \|u\|_{L^{2\sigma}(v;\Omega)} \leq C\|f\|_{L^{(2\sigma)'}(v;\Omega)}.
\end{equation}
If we estimate the right-hand side using H\"older's inequality and 
Lemma~\ref{SCALE},  we get
\[ \|f\|_{L^{(2\sigma)'}(v;\Omega)}
  \leq Cv(\Omega)^{\frac{1}{2\sigma}} \|f\|_{L^A(v;\Omega)}, \]
  where the constant $C$ is independent of $f$ and ${\bf u}$.  For each
  $x\in S(C_0)$ we have that $2u(x)/C_1>1$, so by H\"older's
  inequality and the above two estimates,
\begin{multline*}
  v(S(C_0))
  \leq
  \frac{2}{C_1}\int_{S(C_0)} uv\,dx
  \leq
  \frac{2}{C_1} \|u\|_{L^{2\sigma}(v;\Omega)}
  v(S(C_0))^{\frac{1}{(2\sigma)'} } \\
\leq 
\frac{2C}{C_1}  v(\Omega)^{\frac{1}{2\sigma}}\|f\|_{L^A(v;\Omega)}
v(S(C_0))^{\frac{1}{(2\sigma)'}}
  \leq
  \frac{2C v(\Omega)^{\frac{1}{2\sigma}}}{\tau_0(1-2^{-\epsilon})}
    v(S(C_0))^{\frac{1}{(2\sigma)'}}.
  \end{multline*}
If we re-arrange terms, we get
\begin{equation*}
  v(S(C_0)) \leq
  \left(\frac{C}{\tau_0(1-2^{-\epsilon})}\right)^{2\sigma},
\end{equation*}
where again the constant $C$ is independent of $f$ and $\bf u$.
Now choose $\tau_0>0$ so that 
\begin{equation}\label{tau_0res}
  \mu_0=v(S(C_0)) < e^{-2}, \; \text{ and } \;
\tau_0\geq \max\bigg\{ \frac{2^{\epsilon+1}eC}{2^\epsilon-1},
\frac{eC}{\epsilon}\bigg\}, 
\end{equation}
where $C$ is as in \eqref{iterator}.  Note that $\tau_0$ is
independent of ${\bf u}$ and $f$, and the first inequality
implies that $m_0\geq 2$.  

It is clear that $m_0 \geq m_0$ but for the sake of clarity we also
show that $m_1>m_0+1$.  Since $k=0$ we cannot use \eqref{goodest}, but
instead use \eqref{iterator} directly.  If we set $s=C_1$ and $r=C_0$
we find
$$\frac{C_1}{2}\mu_1^\frac{1}{2\sigma}
\leq C\|f\|_{L^A(v;\Omega)}
\frac{\mu_0^{\frac{1}{\ell(2\sigma)'}}}{\log(e+\mu_0^{-1})^\frac{q}{\sigma'}}.$$
If we use the definition of $C_1$ and recall that
$m_j=\log(\mu_j^{-1})$, we get 
\begin{multline*}
  m_1
  \geq
  2\sigma\log\left(
    \frac{(2^{\epsilon}-1)\tau_0}{2^{\epsilon+1}C}\right)
  +m_0 +2q(\sigma-1)\log(m_0)\\
  \geq \log\left( \frac{(2^{\epsilon}-1)\tau_0}{2^{\epsilon+1}eC}\right)
  +m_0 +1 \geq m_0+1;
\end{multline*}
the second inequality follows since $m_0\geq 1$,  and the third by our
choice of $\tau_0$.

Now suppose that $m_j \geq m_0 + j $ for some
$j\in\mathbb{N}$.  Since $m_0\geq 2$, \eqref{goodest} and
\eqref{tau_0res} together show that 
\begin{multline*}
  m_{j+1}
  \geq 2\sigma\log\left(\frac{\epsilon\tau_0}{C}\right)
  + \frac{2\sigma q}{\sigma'}\log\left(\frac{2+j}{2+j}\right) + m_0 + j \\
\geq \log\left(\frac{\epsilon\tau_0}{eC}\right) + m_0 + j + 1
\geq m_0+j+1.
\end{multline*}
Hence, by induction we have that inequality~\eqref{eqn:best-est} holds for
all $k$, and this completes our proof.


\section{Proof of Theorem \ref{main1}}
\label{section:main1}

Our proof requires one technical lemma.

\begin{lem}\label{Gamma}
  Given $\sigma>1$, there exist constants $b\in (\sigma,2\sigma)$,
  $\bar{b}\in((2\sigma)',\sigma')$, and $p>1$ such that
\begin{equation} \label{eqn:gamma1}
\frac{1}{b}+\frac{1}{\bar{b}}+\frac{1}{p} = 1,
\end{equation}
and
\begin{equation} \label{eqn:gamma2}
\Gamma =
\frac{2\sigma}{\bar{b}}\left(\frac{\sigma'-\bar{b}}{\sigma'}
  +\frac{2\sigma - b}{2\sigma}\right)= 1.
\end{equation}
\end{lem}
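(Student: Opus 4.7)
The plan is a perturbation argument from the degenerate ``boundary'' configuration $(b,\bar{b},p) = (2\sigma,(2\sigma)',\infty)$.

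I would first reduce the condition $\Gamma = 1$ to a simpler algebraic form. Using $2\sigma/\sigma' = 2(\sigma - 1)$, expanding gives
\[
\Gamma = \frac{2\sigma}{\bar{b}}\left(2 - \frac{\bar{b}}{\sigma'} - \frac{b}{2\sigma}\right) = \frac{4\sigma - b}{\bar{b}} - 2(\sigma - 1),
\]
so the condition $\Gamma = 1$ in \eqref{eqn:gamma2} is equivalent to the \emph{linear} relation $(2\sigma - 1)\bar{b} + b = 4\sigma$. This line in the $(b,\bar{b})$-plane passes through the corner $(2\sigma,(2\sigma)')$, at which $1/b + 1/\bar{b} = 1/(2\sigma) + (2\sigma-1)/(2\sigma) = 1$; so at this corner \eqref{eqn:gamma1} forces $p = \infty$, violating $p > 1$ finite. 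The task is therefore to slide along this line into the interior of the rectangle $(\sigma,2\sigma) \times ((2\sigma)',\sigma')$ while keeping $1/b + 1/\bar{b} < 1$.

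To do this I would parametrize the line by $t \ge 0$ via
\[
b(t) = 2\sigma - t, \qquad \bar{b}(t) = \frac{2\sigma + t}{2\sigma - 1},
\]
which satisfies $(2\sigma-1)\bar{b}(t) + b(t) = 4\sigma$ identically. For small $t > 0$ it is clear that $b(t) \in (\sigma, 2\sigma)$ and $\bar{b}(t) > (2\sigma)'$; the bound $\bar{b}(t) < \sigma'$ then follows from $\bar{b}(0) = (2\sigma)' < \sigma'$ (which holds whenever $\sigma > 1$) together with the continuity of $\bar{b}$ in $t$.

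The crucial step is to show that $g(t) := 1/b(t) + 1/\bar{b}(t)$ satisfies $g(0) = 1$ and $g'(0) < 0$, so that $g(t) \in (0,1)$ for small $t > 0$ and $p := (1 - g(t))^{-1}$ is a valid finite exponent strictly greater than $1$. The first is the corner identity above, and a direct differentiation gives
\[
g'(0) = \frac{1}{(2\sigma)^2} - \frac{2\sigma - 1}{(2\sigma)^2} = -\frac{\sigma - 1}{2\sigma^2} < 0,
\]
since $\sigma > 1$. This completes the construction for any sufficiently small $t > 0$. The argument is purely elementary once the constraint $\Gamma = 1$ is recognized as a line, so I anticipate no substantive obstacle beyond identifying this correct parametrization.
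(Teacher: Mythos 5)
Your proof is correct and is essentially the paper's argument: the paper's choice $b=2\sigma(1-\beta)$, $\bar{b}=(1+\beta)(2\sigma)'$ traces exactly your line $(2\sigma-1)\bar{b}+b=4\sigma$ with $t=2\sigma\beta$, perturbing off the corner $(2\sigma,(2\sigma)')$ just as you do. The only cosmetic difference is that the paper verifies $\Gamma=1$ by direct substitution and bounds $\tfrac1b+\tfrac1{\bar b}<1$ by an explicit computation (yielding $\beta<1/\sigma'$) rather than by your derivative argument.
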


\begin{proof}
We will first show that we can choose $b$ and $\bar{b}$ so that
\eqref{eqn:gamma2} holds, and then show that we can refine our choice
so that~\eqref{eqn:gamma1} holds as well.

Set $b = 2\sigma(1-\beta)$ and $\bar{b} = (1+\beta)(2\sigma)'$,
where 
$0<\beta < \min( \frac{1}{2}, \frac{\sigma'-(2\sigma)'}{(2\sigma)'})$
will be determined below.  With this restriction on $\beta$ it is
immediate that $b$ and $\bar{b}$ lie in the specified intervals.  
Moreover, if we insert these values into the definition of $\Gamma$,
we get
\begin{multline*}
  \Gamma
  = \frac{2\sigma}{(1+\beta)(2\sigma)'}
  \bigg( \frac{\sigma'-(1+\beta)(2\sigma)'}{\sigma'}
  + \frac{2\sigma -2\sigma(1-\beta)}{2\sigma}\bigg) \\
  =\frac{2\sigma}{(1+\beta)(2\sigma)'}
  \bigg((1+\beta)\bigg(1- \frac{(2\sigma)'}{\sigma'}\bigg)\bigg)
  = 2\sigma\bigg(\frac{1}{(2\sigma)'}-\frac{1}{\sigma'}\bigg) = 1.
\end{multline*}
This gives~\eqref{eqn:gamma2}.

To show that we can choose $p>1$ and $\beta$ so
that~\eqref{eqn:gamma1} holds, note that
\[
  \frac{1}{b} + \frac{1}{\bar{b}}
= \frac{1}{2\sigma(1-\beta)} + \frac{2\sigma-1}{2\sigma(1+\beta)}
= \frac{1+\beta + 2\sigma - 1 -2\beta\sigma+\beta}{2\sigma(1-\beta^2)}
= \frac{\sigma-\beta\sigma + \beta}{\sigma(1-\beta^2)}.
\]
Thus, $\frac{1}{b}+\frac{1}{\bar{b}} < 1$ exactly when $0<\beta <
\frac{1}{\sigma'}$.  Hence, if we choose $\beta$ sufficiently small we
can find $p>1$ such that ~\eqref{eqn:gamma1} holds.
\end{proof}

\begin{rem}
  In the proof of Lemma~\ref{Gamma}, the range of possible values for
  $\beta$ shrinks as the dimension increases.  In the classical case,
  $\sigma'=\frac{n}{2}$, and this value is generally a lower bound
  on $\sigma'$ in the more degenerate settings.  
\end{rem}

\medskip

\begin{proof}[Proof of Theorem~\ref{main1}]
  Let ${\bf u}=(u,\nabla u)\in QH_0^1(v;\Omega)$ be a non-negative weak subsolution of
\eqref{dp}.  By the homogeneity of equation~\eqref{dp} and
inequality~\eqref{eqn:fixed}, to prove this result it will suffice to
assume that  $\|f\|_{L^{\sigma'}(v;\Omega)}= 1$ and prove that
\begin{equation} \label{eqn:hom}
  \|u\|_{L^\infty(v;\Omega)} \leq C[ 1+ \log(1+\|f\|_{L^A(v;\Omega)})]. 
\end{equation}

To prove \eqref{eqn:hom} we will apply an iteration argument very similar to
that in the proof of Theorem~\ref{main0}, but to the solution of an
auxiliary equation we which now define.
Given that $\|f\|_{L^{\sigma'}(v;\Omega)}= 1$, and since by Theorem~\ref{main0} $u$ is bounded in
$\Omega$, we can apply  Lemma \ref{expint} and fix 
$\gamma\in(0,\frac{4}{C_0^2})$ such that
\begin{equation}\label{wexpint}
  \int_\Omega e^{\gamma u(x)}v(x)\,dx \leq M(\gamma,C_0,v(\Omega)) = M.
\end{equation}

Define $h= e^{\gamma u /p}$ (where $p>1$ will be determined below) and
let $w = h-1$.  By Lemma~\ref{AuxProb},
$(w,\frac{\gamma}{p}h\nabla u)\in QH^1_0(v;\Omega)$ is a non-negative
weak subsolution of
\begin{eqnarray}\label{dpB}
\left\{ \begin{array}{rcll}
-\Div\left(Q\nabla w\right)&=&\alpha fhv&\textrm{for }x\in\Omega,\\
w&=&0&\textrm{for }x\in\partial\Omega.
\end{array}
\right.
\end{eqnarray}
For each $r>0$, let $\varphi_r = (w-r)_+$ and
$S(r) = \{ x\in \Omega~:~ w(x)>r\}$.  By
Lemma~\ref{lemma:main1-lemma}, 
$(\varphi_r,\nabla \varphi_r)\in QH^{1}_0(v;\Omega)$.  By
Lemma~\ref{Gamma}, there exist
$\bar{b}\in ((2\sigma)',\sigma'),\,b\in (\sigma,2\sigma)$, and $p>1$ such
that~\eqref{eqn:gamma1} holds.  
We can now argue as we did in the proof of Theorem~\ref{main0} with
$\varphi_r$ as a test function, and then apply H\"older's inequality
twice to get
\begin{align}\label{2-10}
  \|\varphi_r\|_{L^{2\sigma}(v;\Omega)}^2
  &\leq  C\int_{S(r)} \nabla \varphi_r Q\nabla\varphi_r~dx  \nonumber \\
& = C\int_{S(r)} \nabla \varphi_r Q \nabla w~dx  \nonumber \\
& \leq C\int_{S(r)} f\varphi_r h ~vdx \nonumber \\
& \leq C\|f {\mathbbm 1}_{S(r)}\|_{L^{\bar{b}}(v;\Omega)}
\|\varphi_r\|_{L^{{b}}(v;\Omega)}\|h\|_{L^{p}(v;\Omega)} \nonumber \\
& \leq C\|f {\mathbbm 1}_{S(r)}\|_{L^{\bar{b}}(v;\Omega)}
\|\varphi_r\|_{L^{2\sigma}(v;\Omega)} v(S(r))^{\frac{2\sigma-b}{2\sigma}};
\end{align}
the last inequality follows since $b<2\sigma$ and since
by~\eqref{wexpint}, $h\in L^p(v;\Omega)$ with a constant independent
of $\bf u$ and $f$.

Now define the Young function $B(t) = t^\frac{\sigma'}{\bar{b}}\log(e+t)^q$
and note that $B(|t|^{\bar{b}})\preceq A(t)$.  Therefore, arguing as
before, by
Lemma~\ref{indicators} and \eqref{2-10} we have that
\[ 
  \|\varphi_r\|_{2\sigma}
  \leq
  C\|f\|_A
  \frac{v(S(r))^{\frac{1}{\bar{b}(\sigma'/\bar{b})'}+\frac{2\sigma -
        b}{2\sigma\bar{b}}}}
  {\log(e+v(S(r)^{-1}))^\frac{q}{\sigma'}}
= C\|f\|_A
\frac{v(S(r))^{\frac{\sigma'-\bar{b}}{\bar{b}\sigma'}+\frac{2\sigma - b}
    {2\sigma\bar{b}}}}{\log(e+v(S(r))^{-1})^\frac{q}{\sigma'}}
\]
We can now argue as we did in the proof of Theorem~\ref{main0} to get
that  for all $s>r$,
\[ 
  v(S(s))
  \leq
  \left(\frac{C\|f\|_A}{(s-r)}\right)^{2\sigma}
  \frac{v(S(r))^{\frac{2\sigma}{\bar{b}}\left(\frac{\sigma'-\bar{b}}{\sigma'}+\frac{2\sigma
          -
          b}{2\sigma}\right)}}{\log(e+v(S(r))^{-1})^\frac{2q\sigma}{\sigma'}}
  =  \left(\frac{C\|f\|_A}{(s-r)}\right)^{2\sigma}
  \frac{v(S(r))}{\log(e+v(S(r))^{-1})^\frac{2q\sigma}{\sigma'}};
\] 
the last inequality holds by~\eqref{eqn:gamma2}.  

We continue the proof of Theorem \ref{main0} and define
$\epsilon = \frac{q}{\sigma'}-1>0$, $C_k$, $k\geq 0$, as in
\eqref{ck}, and $m_k = -\log(v(S(C(k)))$ to again get the iteration inequality
\begin{equation} \label{eqn:iterate-again}
  m_{k+1} \geq 2\sigma \log\left(\frac{\epsilon \tau_0}{C}\right)
  + \frac{2\sigma q}{\sigma'}\log\left(\frac{m_k}{k+2} \right) + m_k.
\end{equation}
We will again prove that we can choose the parameter $\tau_0$ such
that $m_0>1$ and  for every $k\in\mathbb{N} \cup \{0\}$,
\begin{equation} \label{eqn:final-iteration}
  m_k \geq m_0 + k
\end{equation}

Assume for the moment that~\eqref{eqn:final-iteration} holds.  Then
arguing as before we have that $\|w\|_\infty \leq \tau_0\|f\|_A$:  that is,
$$e^{c||u||_\infty} \leq \tau_0(\|f\|_A +1),$$
which in turn implies that~\eqref{eqn:hom} holds as desired.

\medskip

Therefore, to complete the proof we need to show
that~\eqref{eqn:final-iteration} holds.  The proof is almost identical
to the proof of~\eqref{eqn:best-est}:  the only difference is in the
choice of $m_0$ which we will describe.  We first estimate as we did for
inequality~\eqref{2-10}:
\begin{multline*}
\|w\|^2_{L^b(v;\Omega)}
  \leq
  \|w\|^2_{L^{2\sigma}(v;\Omega)}v(\Omega)^{\frac{2\sigma-b}{\sigma}}
  \leq C\int_\Omega fwh\,vdx\;
  v(\Omega)^{\frac{2\sigma-b}{\sigma}} \\
  \leq C\|f\|_{L^{\bar{b}}(v;\Omega) }\|w\|_{L^b(v;\Omega)}
    \|h\|_{L^p(v;\Omega)}  v(\Omega)^{\frac{2\sigma-b}{\sigma}}
    \leq C\|f\|_{L^{\bar{b}}(v;\Omega)} \|w\|_{L^b(v;\Omega)}
      v(\Omega)^{\frac{2\sigma-b}{\sigma}}, 
\end{multline*}
    where the last inequality holds since $h\in L^p(v;\Omega)$ with
    norm bounded by a constant.  Furthermore, by H\"older's inequality
    and Lemma~\ref{SCALE},
    \[ \|f\|_{L^{\bar{b}}(v;\Omega)}^{\bar{b}}
      \leq \|f\|_{L^{\sigma'}(v;\Omega)}^{\bar{b}}
      v(\Omega)^{\frac{1}{(\sigma'/\bar{b})'}}
      \leq \|f\|_{L^{A}(v;\Omega)}^{\bar{b}}
      v(\Omega)^{\frac{1}{(\sigma'/\bar{b})'}}. \]
Since 
  $C_0 = C_1/2$,  for every $x\in S(C_0)$
  we have $\frac{2w(x)}{C_1} > 1$.  Thus, combining the above
  inequalities, we get
  \begin{multline*}
    v(S(C_0)) \leq
    \frac{2}{C_1}\int_{S(C_0)} w ~vdx
    \leq \frac{2}{C_1}\|w\|_{L^b(v;\Omega)}
    v(S(C_0))^{\frac{1}{\bar{b}}} \\
    \leq  \frac{2C}{C_1}  \|f\|_{L^{A}(v;\Omega)}
    v(S(C_0))^{\frac{1}{\bar{b}}} 
    v(\Omega)^{\frac{1}{\bar{b}(\sigma'/\bar{b})'}+\frac{2\sigma-b}{\sigma}}
    = \frac{C}{\tau_0(1-2^{-\epsilon})}    v(S(C_0))^{\frac{1}{\bar{b}}}.
  \end{multline*}
Hence,
  $$v(S_0) \leq \left(\frac{C}{\tau_0(1-2^{-\epsilon})}\right)^{b},$$
and so we can choose $\tau_0>0$ independent of both ${\bf u},f$ such that 
\[
\mu_0=v(S(C_0)) < e^{-2}, \quad 
\tau_0\geq \max\bigg\{ \frac{eC}{1-2^{-\epsilon}},
\frac{eC}{\epsilon}\bigg\}
\]
where $C$ is as in \eqref{eqn:iterate-again}. We may now proceed
  exactly as in the proof of ~\eqref{eqn:best-est} to get
  that~\eqref{eqn:final-iteration} holds.  This completes our proof.
\end{proof}
%


\section{Theorem~\ref{main0} is almost sharp}
\label{section:counter-example}

In this section we construct Example~\ref{example:not-sharp-but-close}
that shows that Theorem \ref{main0} is almost sharp in the case of the
Laplacian.  Our example is intuitively straightforward.  Let
our domain $\Omega\subset \mathbb{R}^n$, $n\geq 3$, be the unit ball
$B=B(0,1)$, and define
\[ f(x) = |x|^{-2}\log(e+|x|^{-1})^{-1}.  \]
Let $A(t) = t^{\frac{n}{2}}\log(e+t)^q$.  We will show that $f\in L^A(B)$ if
and only if $q<\frac{n}{2}-1$.  Moreover, we claim that, at least
formally, if $u$ is the solution of $\Delta u = f$ on $B$, then
$u(0)=\infty$.  For if we use the well-known fact that the Green's
function for the unit ball is $c_n|x|^{2-n}$, then
\[ u(0) = c_n \int_B |x|^{-n} \log(e+|x|^{-1})^{-1}\,dx = \infty. \]

To make this argument rigorous we must justify our use of Green's
formula which requires that the function $f$
be continuous on $B$.  To overcome this, we give an approximation
argument and show that the inequality
\[ \|u\|_{L^\infty(B)} \leq C \|f\|_{L^A(B)} \]
cannot hold with a uniform constant.  For each $k\geq 1$, let $\chi_k$
be a continuous, non-negative, radial function such that $\chi_k(x)=0$
if $|x|\leq 2^{-k-1}$, and $\chi_k(x)=1$ if $2^{-k} \leq x <1$.
Define $f_k=u_k$.  Each $f_k$ is continuous, and if $u_k$ is the solution to the
Dirichlet problem
\[ \begin{cases}
    \Delta u_k = f_k  & x\in B, \\
    u_k = 0 & x\in \partial B,
  \end{cases}
\]
then at the origin it is given by 
\[ u_k(0) =  c_n\int_B |x|^{2-n} f_k(x)\,dx
  \geq c_n \int_{2^{-k} \leq |x|<1} |x|^{-n}
  \log(e+|x|^{-1})^{-1}\,dx. \]
It is immediate that $u_k(0) \rightarrow \infty$ as
$k\rightarrow\infty$.
Since by monotonicity of the norm, $\|f_k\|_{L^A(B)} \leq
\|f\|_{L^A(B)}$, we have that the inequality 
\[ u_k(0) \leq \|u_k\|_{L^\infty(B)} \leq C\|f_k\|_{L^A(B)}\leq C\|f\|_{L^A(B)} \]
cannot hold with a uniform constant if $f\in L^A(B)$.

Therefore, to complete the proof, it will suffice to show
$f\in L^A(B)$ if and only if $q<\frac{n}{2}-1$.  By the definition of
the Luxemburg norm, it will suffice to show that $f(A) \in L^1(B)$.
But this is straightforward:
\begin{align*}
  A(f(x))
  & = f(x)^{\frac{n}{2}} \log(e+f(x))^q \\
  & = x^{-n}\log(e+|x|^{-1})^{-\frac{n}{2}}
    \log(e+|x|^{-2}\log(e+|x|^{-1})^{-1})^{q} \\
  & \approx x^{-n}\log(e+|x|^{-1})^{-\frac{n}{2}} \log(e+|x|^{-1})^{q},
\end{align*}
where the implicit constant only depends  on $q$.
Thus,  $A(f)\in L^1(B)$ if and only if $\frac{n}{2}-q>1$, or
equivalently, $q<\frac{n}{2}-1$.


\bibliographystyle{plain}
\bibliography{Infinity2Beyond}

\end{document}